\documentclass[reqno,a4paper,11pt]{amsart}
\usepackage{amsmath,amssymb}
\usepackage[
 hidelinks,
 pdftitle={Improved bounds in Birch's theorem for forms in many variables},
 pdfauthor={Yijie Diao and Lena Wurzinger}
]{hyperref}
\usepackage[margin=1.35in]{geometry}
\usepackage{array}
\usepackage{booktabs}

\numberwithin{equation}{section}

\newcommand{\ZZ}{\mathbb Z}
\newcommand{\QQ}{\mathbb Q}
\newcommand{\RR}{\mathbb R}
\newcommand{\x}{\mathbf{x}}
\newcommand{\h}{\mathbf h}
\newcommand{\e}{\mathrm e}
\newcommand{\eps}{\varepsilon}
\newcommand{\dint}{\,\mathrm d}

\theoremstyle{plain}
\newtheorem{theorem}{Theorem}[section]
\newtheorem{proposition}[theorem]{Proposition}
\newtheorem{lemma}[theorem]{Lemma}
\newtheorem{corollary}[theorem]{Corollary}

\theoremstyle{definition}
\newtheorem{remark}[theorem]{Remark}

\title{Improved bounds in Birch's theorem for forms in many variables}
\author{Yijie Diao}
\address{ISTA\\
Am Campus 1\\
3400 Klosterneuburg\\
Austria}
\email{yijie.diao@ist.ac.at}
\author{Lena Wurzinger}
\address{ISTA\\
Am Campus 1\\
3400 Klosterneuburg\\
Austria}
\email{lena.wurzinger@ist.ac.at}
\subjclass[2020]{11D72 (11P55, 14G12, 11D45)}
\date{\today}

\begin{document}

\begin{abstract}
We improve the best known result on the number of variables needed for the smooth Hasse principle for homogeneous forms of degree \(d\geq5\).
\end{abstract}

\maketitle
\thispagestyle{empty}
\setcounter{tocdepth}{1}
\tableofcontents

\section{Introduction}

Let \(F\in\ZZ[x_1,\ldots,x_n]\) be a homogeneous form of degree \(d\geq3\), write \(\x=(x_1,\ldots,x_n)\), and let \(\sigma\geq0\) denote the affine dimension of the singular locus of the affine hypersurface \(F=0\) over \(\overline{\QQ}\).
We say that \(F\) satisfies the \emph{smooth Hasse principle} if nonsingular zeros over \(\RR\) and over every \(\QQ_p\) imply a nonsingular zero over \(\QQ\).
Birch \cite{birch} proved that this holds whenever
\[
 n-\sigma>(d-1)2^d.
\]

For \(d=3\), Birch's condition is \(n-\sigma\geq17\).
Davenport \cite{davenport_cubic_16} proved that every cubic form in at least \(16\) variables has a nontrivial rational zero, and Heath-Brown \cite{heath_brown_cubic_14} later improved this bound for arbitrary cubic forms to \(14\) variables.
For nonsingular cubic forms, Heath-Brown \cite{heath_brown_cubic_10} proved the Hasse principle in \(10\) variables, and Hooley \cite{hooley_nonary_cubic} later improved the required number to \(9\).
Assuming the Riemann hypothesis for certain Hasse--Weil \(L\)-functions, Hooley \cite{hooley_octonary_cubic} further reduced it to \(8\) variables.

For \(d=4\), Birch's theorem gives \(n-\sigma\geq49\).
Browning and Heath-Brown \cite{browning_heath_brown_quartic} reduced this to \(41\).
Hanselmann \cite{hanselmann_quartic} further improved it to \(40\).
Marmon and Vishe \cite[Theorem~1.1]{marmon_vishe_quartic} proved the smooth Hasse principle for quartic forms whenever \(n-\sigma\geq30\).

For \(d\geq5\), Browning and Prendiville \cite[Theorems~1.1 and~1.2]{bp} proved the smooth Hasse principle whenever
\[
 \begin{aligned}
 n-\sigma&\geq \frac34d2^d-2d + 1,
 &&\text{for }5\leq d\leq9,\\
 n-\sigma&\geq\left(d-\frac12\sqrt d\right)2^d,
 &&\text{for }d\geq10.
 \end{aligned}
\]

There are also several related results that develop Birch's method in other directions.
Brandes \cite{brandes_linear_spaces} used the circle method to count linear spaces on intersections of hypersurfaces.
For systems of forms, Dietmann \cite{dietmann_weyl_systems} and, independently, Schindler \cite{schindler_weyl_variant} replaced Birch's condition on \(V^*\) by a sharper condition involving the singular loci of the nonzero forms in the rational pencil.
Schindler \cite{schindler_higher_expansions}, extending work of Vaughan and Wooley \cite{vaughan_wooley_higher_expansions} on Waring's problem, obtained further terms in the asymptotic formula for general forms under stronger hypotheses.
Yamagishi \cite{yamagishi_hessian} refined Birch's original condition by replacing \(\sigma\) with a Hessian invariant \(\mathcal H_F\leq\sigma\).
Lee \cite{lee_function_fields} proved an analogue of Birch's theorem for forms over function fields.
Very recently, Hase-Liu \cite{hase_liu_function_fields} reduced the number of variables needed in Birch's theorem over function fields from exponential to quadratic for smooth hypersurfaces under suitable conditions.

In this paper, we prove sharper uniform conditions in every degree \(d\geq5\).
For \(3\leq k<d\), define
\begin{equation}\label{eq:intro-C}
 C_{d,k}
 =2^d\left\{
 d+\frac38-\frac12\left(k+\frac dk\right)
 +\left(\frac dk-1\right)(2^{-k}-2^{1-d})
 \right\}-1.
\end{equation}
Put \(C_d=\max_{3\leq k<d}C_{d,k}\).

\begin{theorem}\label{thm:main}
Let \(d\geq5\), and let \(F\in\ZZ[x_1,\ldots,x_n]\) be a form of degree \(d\).
Write \(\sigma\) for the affine dimension of the singular locus of the affine hypersurface \(F=0\) over \(\overline{\QQ}\).
Suppose that
\[
 n-\sigma\geq
 \frac34d2^d-2d-\left(3\cdot2^{d-3}-3\right),
 \qquad\text{for }d=5,6;
\]
or
\[
 n-\sigma>C_d,
 \qquad\text{for }d\geq7.
\]
Then the smooth Hasse principle holds for \(F\).
\end{theorem}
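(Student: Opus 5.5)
The plan follows Browning--Prendiville \cite{bp}: run Birch's circle method, but replace the full $(d-1)$-fold Weyl differencing on the minor arcs by a hybrid of partial differencing and a mean value estimate. The $d=5,6$ bound is exactly their bound lowered by $3\cdot2^{d-3}-3$, which suggests the saving comes from sharpening one input. For $d\ge7$ the shape of $C_{d,k}$ indicates differencing $k-1$ times and then exploiting the remaining degree-$(d-k+1)$ structure. The factor $d/k$ records how many blocks of size $k$ fit into degree $d$, and the term $\tfrac12(k+d/k)$ is minimised near $k\approx\sqrt d$, as in the Browning--Prendiville exponent $d-\tfrac12\sqrt d$.

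\emph{Key steps.}

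First, take the standard major arc treatment from Birch unchanged. It gives the singular series and singular integral, which are positive under local solubility at nonsingular points once $n-\sigma>(d-1)2^{d-1}$-type conditions hold; these are far weaker than the conditions in the theorem. So everything reduces to showing
\[
\int_{\mathfrak m}|S(\alpha)|\dint\alpha=o(P^{n-d}).
\]

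Second, prove a Weyl-type inequality after $k-1$ differencings. With $H$ a parameter, one obtains
\[
|S(\alpha)|^{2^{k-1}}\ll P^{(2^{k-1}-k+1)n+\eps}\,H^{-(k-1)n}\sum_{\h}|T_{\h}(\alpha)|,
\]
where $T_{\h}$ is an exponential sum of a form of degree $d-k+1$ in $\x$, parametrised by $\h_1,\dots,\h_{k-1}$.

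Third, bound the moments of the sums $T_{\h}$. Counting integer points on the ``singular'' variety in $(\h,\x)$-space, via the dimension bound $\sigma$ as in Birch, supplies the factor $2^{-k}$. Applying Birch's full differencing to the inner sums supplies the factor $2^{1-d}$. The outcome is an estimate for $\int_{\mathfrak m}|S|$ in terms of $n-\sigma$.

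Fourth, optimise $H$ as a power of $P$. The dyadic pieces of $\mathfrak m$ must be balanced against the major-arc width. Requiring a power saving then yields exactly $n-\sigma>C_{d,k}$, and the theorem follows by choosing the best $k$. For $d=5,6$ the range $3\le k<d$ is too short to beat \cite{bp} by this route. Instead, I would run the same argument inside Browning--Prendiville's $\tfrac34 d2^d$ framework, inserting the improved moment bound from the third step. That bound saves $3(2^{d-3}-1)$ variables, which matches the extra term in the $d=5,6$ condition.

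\emph{Main obstacle.} The hardest step is the third: controlling the auxiliary forms in $(\h,\x)$ uniformly. One needs the dimension of the locus where $T_{\h}$ is large to be bounded by $\sigma+(k-1)n$ plus an explicit loss. This requires an algebraic-geometric argument comparing the multilinear forms $\Psi(\h_1,\dots,\h_{k-1},\x,\dots)$ with the singular locus of $F$. I would first try Birch's argument, which intersects with the diagonal $\h_1=\dots=\h_{d-1}=\x$, applied to the full multilinear form. I would then use Davenport's shrinking lemma to pass between levels of differencing without losing more than the stated $2^{-k}$ term. The arithmetic in the final optimisation is then routine, though the constants $3/8$ and $-1$ must be matched with care.
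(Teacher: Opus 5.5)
Your proposal has genuine gaps. The first step already fails as stated. Birch's major arc analysis rests on $S_{a,q}\ll q^{n-K/(d-1)+\eps}$ with $K=(n-\sigma)/2^{d-1}$. The singular series and the major arc error are therefore controlled only when $K>2(d-1)$, that is, $n-\sigma>(d-1)2^d$. This is stronger, not weaker, than every hypothesis of the theorem: for instance $101<128$ when $d=5$, and $C_d<(d-\sqrt d+1)2^d$ in general. So the major arcs cannot be taken from Birch unchanged. The paper instead uses \cite[Lemma~5.1]{bp}, which needs only $n-\sigma>\frac34(d-1)2^d$. That lemma relies on Browning and Prendiville's sharper bounds for complete sums modulo $q=b_qc_q^2d_q$.

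More seriously, your second to fourth steps contain no mechanism that could produce $C_{d,k}$ or the bound for $d=5,6$. The claim that optimising $H$ gives ``exactly'' $C_{d,k}$ is asserted, not derived. Weyl differencing plus counting points on singular loci is precisely Birch's toolkit. What takes one below $(d-1)2^d$ is differencing down to cubics and applying the Poisson bound \cite[Lemma~3.4]{bp}, which is governed by the height $\eta_q$ of the modulus.

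Your reading of the terms $2^{-k}$ and $2^{1-d}$ is numerology. In the paper, $C_{d,k}=A_d+\frac dkB_k+(\frac dk-1)D_k$ comes from distributing the $n-\sigma$ powers among three pointwise factors, namely $R^{-1}\asymp\eta_q^{-1}$, $Z_k=(B^ku)^{-1/B_k}$ and $Y_{k+1}=u^{1/D_k}$, against the cell weight $R^{A_d}u$. The $2^{-k}$ comes from the term $2^{d-k}$ in $D_k=k2^{d-1}+2^{d-k}-2$. The constant $A_d=3\cdot2^{d-3}-1$ is the new modulus count of Lemma~\ref{lem:moduli}, obtained by writing $q=b_qd_q^3e_q^2s^2$ and multiplying the two height inequalities on each dyadic box.

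For $d=5,6$ your diagnosis is also off. There $C_d<L_d$ (e.g.\ $C_5\approx97.7<L_5=100$), so the binding range is $\eta_q>B^\gamma$ with $\gamma=(3\cdot2^{d-2}-2)^{-1}$. The missing idea is the Gaussian-averaged first van der Corput step of Marmon and Vishe (Lemma~\ref{lem:average}). It integrates over $z$ rather than working pointwise, and gains a factor $(H_1/B)^{1/2}$ with $H_1=B^{2\gamma}$. Because it sums over all first shifts instead of selecting one, the remaining $d-4$ differencings and the cubic bound must be run uniformly in the first shift (Lemma~\ref{lem:uniform-selection} and Proposition~\ref{prop:cubic-family}). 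This yields the contribution $B^{n-\frac12-\gamma(n-\sigma-1)}$, hence the threshold $L_d=1+(d-\frac12)(3\cdot2^{d-2}-2)$. Your unspecified ``improved moment bound'' cannot stand in for this. Note also that the actual saving over \cite{bp} is $3\cdot2^{d-3}-2$, not $3\cdot2^{d-3}-3$.
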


For \(5\leq d\leq12\), the following table compares the least integer values of \(n-\sigma\) for which the smooth Hasse principle holds as a consequence of \cite{bp} and of Theorem~\ref{thm:main}.
\begin{center}
\begin{tabular}{@{}c|rrrrrrrr@{}}
\(d\)&5&6&7&8&9&10&11&12\\ \hline
\cite{bp}
&111&277&659&1{}521&3{}439&8{}621&19{}132&42{}058\\
Theorem~\ref{thm:main}
&101&255&621&1{}468&3{}388&7{}675&17{}146&37{}882
\end{tabular}
\end{center}

\medskip

We also give an explicit estimate for \(C_d\).

\begin{corollary}\label{cor:asymptotic}
As \(d\to\infty\),
\[
 C_d=
 \left(d-\sqrt d+O(1)\right)2^d.
\]
Moreover, for every \(d\geq7\), we have
\[
 C_d<\left(d-\sqrt d+1\right)2^d.
\]
\end{corollary}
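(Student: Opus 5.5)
We study \(C_{d,k}/2^d\) as a function of \(k\). Write
\[
 C_{d,k}+1=2^d\left\{d+\tfrac38-\tfrac12\left(k+\tfrac dk\right)+E_{d,k}\right\},
 \qquad
 E_{d,k}=\left(\tfrac dk-1\right)(2^{-k}-2^{1-d}).
\]
Since \(3\leq k<d\), we have \(0\leq E_{d,k}\leq (d/k)2^{-k}\leq (d/3)2^{-3}\). This crude bound is too weak, so the error term has to be handled according to the size of \(k\).

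For the \textbf{upper bound}, note that \(\tfrac12(k+d/k)\geq\sqrt d\) by AM--GM, with equality at \(k=\sqrt d\). This gives
\[
 C_{d,k}+1\leq 2^d\left\{d-\sqrt d+\tfrac38+E_{d,k}\right\}.
\]
It then suffices to show that \(E_{d,k}<\tfrac58\) for every \(k\) that could be a maximiser. When \(k\geq 4\), we have \(E_{d,k}\leq (d/k)2^{-k}\), but for large \(d\) this need not be below \(5/8\). So the AM--GM slack must be kept instead. Write
\[
 \tfrac12\left(k+\tfrac dk\right)-\sqrt d=\frac{(k-\sqrt d)^2}{2k},
\]
and prove the inequality
\[
 \frac{(k-\sqrt d)^2}{2k}\;\geq\;\left(\tfrac dk-1\right)2^{-k}-\tfrac58
\]
for all \(3\leq k<d\) and \(d\geq7\). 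For \(k\) near \(\sqrt d\), say \(|k-\sqrt d|\leq\sqrt d/2\), the term \((d/k)2^{-k}\leq 2\sqrt d\,2^{-\sqrt d/2}\) is tiny once \(d\) is moderately large. For \(k\) far from \(\sqrt d\), the left side is at least of order \(\sqrt d\) or \(d/k\), while the right side is at most \((d/k)/8\). Concretely:
\begin{itemize}
 \item For \(k\leq\sqrt d/2\), the left side is at least \(d/(8k)\), which beats \((d/k)2^{-k}\leq d/(8k)\) together with the \(-5/8\) slack.
 \item For \(k\geq 2\sqrt d\), the left side is at least \(k/8\), while the right side is at most \(\sqrt d\,2^{-k}/2\), which is negligible.
\end{itemize}
The finitely many small \(d\) not covered by the asymptotic estimates (say \(7\leq d\leq d_0\) with \(d_0\) around \(30\)) are checked directly by computing \(\max_k C_{d,k}\).

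For the \textbf{asymptotic}, take \(k=\lfloor\sqrt d\rfloor\). Then \((k-\sqrt d)^2/(2k)=O(1/\sqrt d)\) and \(E_{d,k}\geq0\), so
\[
 C_d\geq 2^d\left(d-\sqrt d+\tfrac38-o(1)\right)-1.
\]
Combined with the upper bound, this gives \(C_d=(d-\sqrt d+O(1))2^d\).

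The \textbf{main obstacle} is making the case split fully explicit, so that the inequality holds for all \(d\geq7\) and not just for large \(d\). The regime \(k=3,4\) with \(d\) of moderate size is the delicate one: there \(E_{d,k}\approx d/24\) competes with \((3-\sqrt d)^2/6\approx d/6\). A direct comparison of these two quadratics in \(\sqrt d\), plus a finite numerical check for the remaining small \(d\), should settle it.
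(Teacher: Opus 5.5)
Your reduction is sound. Keeping the AM--GM slack, the uniform bound follows from
\[
 \frac{(k-\sqrt d)^2}{2k}\geq\Bigl(\frac dk-1\Bigr)2^{-k}-\frac58,
 \qquad 3\leq k<d.
\]
Your two outer cases \(k\leq\sqrt d/2\) and \(k\geq2\sqrt d\) are correct. The asymptotic lower bound via \(k=\lfloor\sqrt d\rfloor\) (for \(d\geq9\)) together with \(E_{d,k}\geq0\) is also fine.

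The gap is in the middle range and in the claimed cutoff. The bound \(2\sqrt d\,2^{-\sqrt d/2}\leq\frac58\) holds only once \(\sqrt d\geq10\), that is \(d\geq100\), not for \(d\) around \(30\). For example, at \(d=50\) and \(k=4\), which lies in your middle range, \((\frac dk-1)2^{-k}=\frac{23}{32}>\frac58\). So the error term is not negligible there, and the left side is needed. Your cases also skip \(\frac32\sqrt d<k<2\sqrt d\), although the same bound covers it. As written, the second assertion of the corollary for roughly \(7\leq d<100\) rests entirely on a numerical verification you have not carried out.

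The missing idea is that the quadratic comparison you reserve for \(k=3,4\) works for every \(k\geq3\) at once. This removes both the case split and the computation. Multiply your target inequality by \(2k\) and write \(s=\sqrt d\). It becomes
\[
 (1-2^{1-k})s^2-2ks+k^2+k2^{1-k}+\frac{5k}4\geq0.
\]
This is a quadratic in \(s\) with positive leading coefficient and discriminant
\[
 4k\bigl(k2^{1-k}+2^{-1-k}+2^{2-2k}-\frac54\bigr)\leq4k\bigl(\frac34+\frac1{16}+\frac1{16}-\frac54\bigr)<0
\]
for \(k\geq3\), so it holds for all real \(s\).

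This is exactly the paper's argument. The paper writes \(C_{d,k}/2^d=d+\frac38-\phi_d(k)\), substitutes \(y=\sqrt d/k\), and discards the positive terms \((\frac dk-1)2^{1-d}\) and \(2^{-d}\). Completing the square then gives \(\phi_d(k)-\sqrt d>2^{-k}-k/(2^k-2)>-\frac12\) for all \(3\leq k<d\). This even yields \(C_d<(d-\sqrt d+\frac78)2^d\) for every \(d\geq7\), with no numerics.
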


For \(d=5,6\), our result requires \(3\cdot2^{d-3}-2\) fewer variables than Browning and Prendiville \cite{bp}.
For large \(d\), we obtain an extra saving of \((\frac12\sqrt d+O(1))2^d\).

\medskip
We improve the result in three ways.
First, Lemma~\ref{lem:moduli} sharpens the count of admissible moduli.
Second, Lemma~\ref{lem:average} uses a Gaussian average to carry out the first van der Corput step without selecting a single first shift; this follows the method of Marmon and Vishe \cite[Section~3.2]{marmon_vishe_quartic}.
Third, Proposition~\ref{prop:cubic-family} applies the final cubic estimate uniformly to the resulting shifts.

\subsection*{Notation}
In a dyadic decomposition, \(x\sim X\) means \(X\leq x<2X\).
We write
\[
 \e(z)=\exp(2\pi iz),\qquad
 \|\theta\|=\min_{a\in\ZZ}|\theta-a|,\qquad
 |\mathbf v|=\max_i|v_i|.
\]
We also write \((x)_+=\max\{x,0\}\).
Finally, \(p^j\Vert q\) means that \(p^j\mid q\) but \(p^{j+1}\nmid q\).
\subsection*{Acknowledgments}

We are grateful to our advisor Tim Browning for his comments throughout this work.
We also thank Christian Bernert and Julia Brandes for helpful discussions.

\section{Auxiliary results}\label{sec:inputs}

We introduce the three estimates that will be used in the proof of Theorem~\ref{thm:main}.

\subsection{Counting moduli}

Following \cite[Eq.~(3.12)]{bp}, for \(q\geq1\), we write \(q=b_qc_q^2d_q\), where
\[
 b_q=\prod_{\substack{p^r\Vert q\\r\leq2}}p^r,\qquad
 c_q=\prod_{\substack{p^r\Vert q\\r>2}}p^{\lfloor r/2\rfloor},\qquad
 d_q=\prod_{\substack{p^r\Vert q\\r>2,\ 2\nmid r}}p.
\]
Using this factorization, define \(\eta_q\) as in \cite[Eq.~(3.24)]{bp}:
\begin{equation}\label{eq:eta}
 \eta_q=\max\left\{
 (b_q^3c_qd_q^2)^{1/(2^d-2)},
 (b_q^3c_q^2d_q)^{1/(5\cdot2^{d-2}-4)}
 \right\}.
\end{equation}

Browning and Prendiville \cite[Lemma~4.4]{bp} prove the following bound.
\begin{equation}\label{eq:bp-modulus-count}
 \#\{q \in \mathbb{N}:\eta_q\leq R\}\ll_d
 R^{7\cdot2^{d-4}-\frac54}.
\end{equation}
The next lemma lowers the exponent in \eqref{eq:bp-modulus-count} by coupling the two height inequalities on each dyadic box.

\begin{lemma}\label{lem:moduli}
For \(R \geq 1\), we have
\[
 \#\{q \in \mathbb{N} :\eta_q\leq R\}\ll_d
 R^{A_d}(\log(2R))^4,
\]
where \(A_d=3\cdot2^{d-3}-1\).
\end{lemma}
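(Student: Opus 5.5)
We propose to prove Lemma~\ref{lem:moduli} by parametrising \(q\) through the triple \((b_q,c_q,d_q)\), counting the triples in dyadic boxes with a bound sharper than the crude one, and then optimising the resulting exponent under the two height inequalities coming from \eqref{eq:eta}. The point is that a crude count \(\#\{(b,c,d)\}\ll BC\) in a box \(b\sim B\), \(c\sim C\), \(d\sim D\) only gives the exponent \(2^{d-1}-2\). To do better, we must use the arithmetic structure of \(c_q\).

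\emph{Structure of the triple.} First note that \(q\mapsto(b_q,c_q,d_q)\) is injective, since \(q=b_qc_q^2d_q\). Also, \(b_q\) is cube-free and \(d_q\) is squarefree with \(d_q\mid c_q\). Now look at a prime \(p\) with \(p^r\Vert q\) and \(r\geq3\).
\begin{itemize}
\item If \(p\mid d_q\), then \(r=2k+1\) with \(k\geq1\), and \(p^k\Vert c_q\).
\item If \(p\nmid d_q\), then \(r=2k\) with \(k\geq2\).
\end{itemize}
Hence \(c_q=d_q m\), where \(m=uv\). Here \(u\) is squarefull and coprime to \(d_q\), and \(v\) is composed only of primes dividing \(d_q\).

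\emph{Counting in a box.} Fix dyadic ranges \(b\sim B\), \(c\sim C\), \(d\sim D\) with \(D\ll C\). We count as follows:
\begin{itemize}
\item at most \(B\) choices of \(b\) and at most \(D\) choices of \(d\);
\item at most \(\ll (C/D)^{1/2}\) choices of the squarefull factor \(u\leq C/D\);
\item for the \(d\)-smooth factor \(v\leq C/D\), a count we expect to bound by \(\ll\log(2R)\) (or at worst \(\ll R^{\varepsilon}\), which would need slightly more care).
\end{itemize}
For the \(v\)-count, we would use that the number of integers up to \(X\) supported on a fixed set of \(\omega(d)\) primes is small; alternatively, one can average over \(d\) using Rankin's trick. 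This step, getting a clean logarithmic bound rather than \(R^\varepsilon\), is the one I expect to be the main technical obstacle. Granting it, the box contributes
\[
 \ll BD^{1/2}C^{1/2}\log(2R).
\]

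\emph{Optimisation.} Write \(B=R^x\), \(C=R^y\), \(D=R^z\), and set \(a=2^d-2\), \(\beta=5\cdot2^{d-2}-4\). The condition \(\eta_q\leq R\) forces
\[
3x+y+2z\leq a,\qquad 3x+2y+z\leq\beta,\qquad 0\leq z\leq y,\ x\geq0.
\]
We must maximise \(x+(y+z)/2\) over this polytope. Checking the vertices, the maximum is attained at \(z=0\), \(y=\beta-a=2^{d-2}-2\), \(x=2^{d-2}\), giving the value
\[
2^{d-2}+2^{d-3}-1=3\cdot2^{d-3}-1=A_d.
\]
For comparison, the vertex \(z=y\) gives only \(a/3\), which is smaller than \(A_d\). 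Since \(B,C,D\leq q\) and the two height conditions bound \(q\) by a power of \(R\), there are \(\ll(\log 2R)^3\) dyadic boxes. Summing the box bound over them gives the claimed estimate \(R^{A_d}(\log 2R)^4\).
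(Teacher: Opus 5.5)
\textbf{There is a genuine gap, in the step you grant.} Your parametrisation $c_q=d_q uv$ is correct. Your optimisation is also correct: adding the two exponent inequalities gives $x+(y+z)/2\leq(a+\beta)/6=A_d$ directly, so no vertex check is needed. Your target box bound $BD^{1/2}C^{1/2}$ is, in the paper's variables, exactly the quantity $B_0C_0S_0^{1/2}E_0^{1/2}$ that the paper bounds by $R^{A_d}$.

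The problem is the count of $v\leq C/D$ composed of primes dividing $d_q$. This is not $\ll\log(2R)$:
\begin{itemize}
\item for $d_q=6$, the integers $2^a3^b\leq X$ number $\asymp(\log X)^2$;
\item for $d_q=p_1\cdots p_k$, one gets $\gg_k(\log X)^k$.
\end{itemize}
Averaging over $d_q\sim D$ does not rescue this bare count, since a positive proportion of such $d_q$ are divisible by $30$. Rankin's trick gives only $R^{\eps}$, hence $R^{A_d+\eps}$. That would suffice for the later application, but it is weaker than the statement. The loss comes from bounding the number of squarefull $u$ by $(C/D)^{1/2}$ independently of $v$.

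\textbf{Missing idea.} The factors $u$ and $v$ share the budget $C/D$, so they must be counted jointly. Since $u<2C/(Dv)$, for fixed $d_q$ the number of pairs $(u,v)$ is
\(\ll(C/D)^{1/2}\sum_{v\mid d_q^\infty}v^{-1/2}=(C/D)^{1/2}\prod_{p\mid d_q}(1-p^{-1/2})^{-1}\).
This multiplicative function equals $\sum_{m\mid d_q}g(m)$ with $g$ supported on squarefree $m$ and $g(p)=(p^{1/2}-1)^{-1}$. Because $\sum_p g(p)/p<\infty$, it has bounded mean over $d_q\sim D$. The box count is therefore $\ll BD^{1/2}C^{1/2}$ with no logarithm at all, and your argument then goes through.

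\textbf{How the paper handles it.} It avoids $v$ altogether. In $v$, only primes with $p^5\Vert q$ occur to the first power. Put $e_q=\prod_{p^5\Vert q}p$; then $s=c_q/(d_qe_q)$ is squareful, absorbing $u$ and the rest of $v$. The tuple $(b_q,d_q,e_q,s)$ determines $q$. The pairs $e_q\mid d_q$ in a box number $\ll\sum_{e\sim E_0}(C_0/e+1)\ll C_0$. Each box therefore contains $\ll B_0C_0S_0^{1/2}\leq B_0C_0S_0^{1/2}E_0^{1/2}\ll R^{A_d}$ moduli, and there are $O((\log 2R)^4)$ boxes.
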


\begin{proof}
Let
\[
 e_q=\prod_{p^5\Vert q}p,
 \qquad
 s=\frac{c_q}{d_qe_q}.
\]
If \(p^r\Vert q\) with \(r>2\), then the exponent of \(p\) in \(s\) is zero for \(r=3,5\) and is at least two otherwise.
Thus \(s\) is squareful, while \(d_q\) and \(e_q\) are squarefree and \(e_q\mid d_q\).
Moreover,
\[
 c_q=d_qe_qs,
 \qquad
 q=b_qd_q^3e_q^2s^2,
\]
so the tuple \((b_q,d_q,e_q,s)\) uniquely determines \(q\).

Partition these tuples into dyadic boxes.
Write \(b_q\sim B_0\), \(d_q\sim C_0\), \(e_q\sim E_0\), and \(s\sim S_0\).
There are \(O(B_0)\) choices for \(b_q\) and there are \(O(S_0^{1/2})\) squareful integers in \([S_0,2S_0)\).
Since \(e_q\mid d_q\), write \(d_q=e_qf\).
For fixed \(e_q\), the condition \(d_q\sim C_0\) leaves \(O(C_0/e_q+1)\) choices for \(f\).
A nonempty box has \(E_0\ll C_0\), and hence
\[
 \sum_{e_q\sim E_0}
 \left(\frac{C_0}{e_q}+1\right)
 \ll C_0.
\]
For an upper bound, we may count all \(d_q\) and \(e_q\) in their dyadic intervals with \(e_q\mid d_q\).
Thus the number of tuples in the box satisfies
\begin{equation}\label{eq:moduli-box-count}
 \ll B_0C_0S_0^{1/2}.
\end{equation}

Now impose the condition \(\eta_q\leq R\).
Equation~\eqref{eq:eta} yields
\begin{align}
 b_q^3c_qd_q^2
 &\leq R^{2^d-2},
 \label{eq:moduli-height-first}\\
 b_q^3c_q^2d_q
 &\leq R^{5\cdot2^{d-2}-4}.
 \label{eq:moduli-height-second}
\end{align}
After substituting \(c_q=d_qe_qs\) and passing to the dyadic ranges, these inequalities become
\begin{align}
 (B_0C_0)^3S_0E_0
 &\ll R^{2^d-2},
 \label{eq:moduli-dyadic-height-first}\\
 (B_0C_0)^3S_0^2E_0^2
 &\ll R^{5\cdot2^{d-2}-4}.
 \label{eq:moduli-dyadic-height-second}
\end{align}
Multiplying \eqref{eq:moduli-dyadic-height-first} and \eqref{eq:moduli-dyadic-height-second} and taking sixth roots, we find
\begin{equation}\label{eq:moduli-coupled-box-bound}
 B_0C_0S_0^{1/2}E_0^{1/2}
 \ll R^{(2^d-2+5\cdot2^{d-2}-4)/6}
 =R^{3\cdot2^{d-3}-1}
 =R^{A_d}.
\end{equation}
Therefore, \eqref{eq:moduli-box-count} and \eqref{eq:moduli-coupled-box-bound} show that every nonempty box contains \(O(R^{A_d})\) tuples.
Moreover, \eqref{eq:moduli-height-first} and \eqref{eq:moduli-height-second} bound each dyadic parameter by a fixed power of \(R\).
Thus there are \(O_d((\log(2R))^4)\) boxes, and summing over them proves the lemma.
\end{proof}

\begin{remark}
The exponent in Lemma~\ref{lem:moduli} is essentially optimal.
Indeed, consider
\[
 q=uw^4,
 \qquad
 (u,w)=1,
 \qquad
 \mu^2(u)=\mu^2(w)=1,
\]
where
\[
 u\leq R^{2^{d-2}}
 \quad\text{and}\quad
 w\leq R^{2^{d-3}-1}.
\]
Coprime squarefree pairs have positive density, so there are \(\gg_d R^{A_d}\) such pairs.
For each of them, \(b_q=u\), \(c_q=w^2\), and \(d_q=1\).
Hence
\[
 u^3w^2\leq R^{2^d-2},
 \qquad
 u^3w^4\leq R^{5\cdot2^{d-2}-4},
\]
and therefore \(\eta_q\leq R\).
\end{remark}

\subsection{Averaging the first difference}\label{sec:averaging}

If a nonsingular local zero is missing, there is nothing to prove.
We therefore assume from now on that \(F\) has a nonsingular zero over \(\RR\) and over every \(\QQ_p\).

Let \(B\geq1\) be a scaling parameter.
Fix a nonsingular real zero \(\x_0\), and use homogeneity to rescale it so that \(|\x_0|<1\).
After permuting the coordinates, assume that \(\partial_1F(\x_0)\neq0\).
Following \cite[Section~5]{bp}, choose \(\rho>0\) sufficiently small and set
\[
 \omega(\x)=w\bigl(\rho^{-1}\|\x-\x_0\|_2\bigr),
 \qquad
 w(t)=
 \begin{cases}
  \exp\bigl(-1/(1-t^2)\bigr),& |t|<1,\\
  0,& |t|\geq1.
 \end{cases}
\]
The weight \(\omega\) is nonnegative and smooth, with compact support and bounded derivatives of every order.
Since \(\partial_1F(\x_0)\neq0\), continuity allows \(\rho\) to be chosen so that, for some \(c_0>0\),
\[
 \min_{\mathbf y\in\operatorname{supp}(\omega)}
 |\partial_1F(\mathbf y)|\geq c_0.
\]
By homogeneity, there is a constant \(c_1>0\), depending only on \(F\) and \(\omega\), such that
\begin{equation}\label{eq:lower-bound-partial-F}
 |\partial_1F(\x)|\geq c_1B^{d-1},
 \qquad\text{for }\x/B\in\operatorname{supp}(\omega).
\end{equation}
All translated products of weights below then belong to one fixed smooth class.
Write
\begin{equation}\label{eq:exponential-sum}
 S_\omega(\alpha;B)=\sum_{\x\in\ZZ^n}\omega(\x/B)
 \e(\alpha F(\x)).
\end{equation}
For \(\h\in\ZZ^n\), define the first difference of \(F\) by
\begin{equation}\label{eq:first-difference}
 \Delta_\h F(\x)=F(\x+\h)-F(\x).
\end{equation}
The associated first difference sum is
\begin{equation}\label{eq:first-difference-sum}
 S_{\h}(\alpha;B)=\sum_{\x\in\ZZ^n}
 \omega((\x+\h)/B)\omega(\x/B)
 \e(\alpha\Delta_\h F(\x)).
\end{equation}

The following estimate extends the averaged van der Corput estimate of Marmon and Vishe \cite[Lemma~3.4]{marmon_vishe_quartic} from quartic forms to forms of degree \(d\).

\begin{lemma}\label{lem:average}
Let \(\eps>0\), \(B\geq2\), \(1\leq H\leq B\), and \(0<t\leq1\).
Let \(a\in\ZZ\) and \(q\in\mathbb N\) with \((a,q)=1\), and put \(\lambda=(HB^{d-1})^{-1}\).
Define
\begin{equation}\label{eq:shift-average}
 \Sigma_H=\max_{(t-B^\eps\lambda)_+\leq |z|
 \leq2t+B^\eps\lambda}
 \sum_{|\h|\leq HB^\eps}|S_{\h}(a/q+z;B)|.
\end{equation}
Then, for every fixed \(J\geq1\), we have
\begin{equation}\label{eq:average}
 \int_{t\leq|z|\leq2t}|S_\omega(a/q+z;B)|\dint z
 \ll B^{-J}+B^\eps(t+\lambda)
 \left(\frac BH\right)^{(n-1)/2}\Sigma_H^{1/2}.
\end{equation}
The implied constant depends only on \(F,\omega,\eps\), and \(J\).
\end{lemma}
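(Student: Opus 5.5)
We follow the Marmon--Vishe approach of \cite[Lemma~3.4]{marmon_vishe_quartic}. The idea is to smooth the integral over \(z\) by a Gaussian in \(z\) of width about \(\lambda\), apply Cauchy--Schwarz, and then carry out a van der Corput differencing in which the shift \(\h\) is itself averaged over a box of side about \(H\).

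The first step is to insert the smoothing. Let \(\gamma(z)=\lambda^{-1}\exp(-\pi z^2/\lambda^2)\). We bound \(\int_{t\le|z|\le2t}|S_\omega(a/q+z)|\dint z\) by \(\int\phi(z)|S_\omega(a/q+z)|\dint z\), where \(\phi\) is the indicator of \([t,2t]\) up to sign, convolved with \(\gamma\) after a slight enlargement. Alternatively, one can write \(|S(\alpha)|\) as an average of \(|S|\) over \(\alpha+u\) with \(|u|\ll B^\eps\lambda\), using a partition of unity. Gaussian tails beyond \(B^\eps\lambda\) contribute \(O(B^{-J})\). This is why the range \((t-B^\eps\lambda)_+\le|z|\le 2t+B^\eps\lambda\) appears in \eqref{eq:shift-average}, and why the measure of the integration set is \(\ll t+\lambda\).

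The second step is the van der Corput differencing. For a fixed \(\alpha\), Weyl/van der Corput differencing with shifts \(|\h|\le H\) gives
\[
|S_\omega(\alpha)|^2\ll \frac{B^n}{H^n}\sum_{|\h|\le H}|S_\h(\alpha)|.
\]
To get this, we write \(S_\omega=\sum_{\x}\) and split \(\x\) into residue-free translates \(\x+\mathbf k\) with \(|\mathbf k|\le H\), then apply Cauchy--Schwarz over the \(\asymp (B/H)^n\) blocks. This by itself only gives the factor \((B/H)^{n/2}\). The saving of \((B/H)^{-1/2}\), giving \((B/H)^{(n-1)/2}\), must come from the averaging over \(z\).

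So the third step is to apply Cauchy--Schwarz in \(z\) against the Gaussian first, obtaining
\[
\Bigl(\int\phi|S|\Bigr)^2\le \Bigl(\int\phi\Bigr)\int\phi(z)|S_\omega(a/q+z)|^2\dint z.
\]
Expanding \(|S_\omega|^2=\sum_{\x,\mathbf y}\), we set \(\mathbf y=\x+\h\) and integrate first against the Gaussian-smoothed \(\phi\). This produces the factor \(\hat\phi(\Delta_\h F(\x))\), with \(\hat\gamma(u)=\exp(-\pi\lambda^2u^2)\). Since \(\Delta_\h F(\x)=\h\cdot\nabla F(\x)+O(|\h|^2B^{d-2})\) and \(|\partial_1F|\ge c_1B^{d-1}\) by \eqref{eq:lower-bound-partial-F}, the factor is negligible unless \(|\h\cdot\nabla F(\x)|\ll B^\eps\lambda^{-1}=B^\eps HB^{d-1}\). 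This forces \(|h_1|\ll B^\eps H\) once the other coordinates are of size at most \(B^\eps H\).

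The fourth step assembles the bound. Combining the third step with the block decomposition from the second step, with block size \(H\), gives effectively \((B/H)^{n-1}\) blocks times an average over \(\h\) in the box \(|\h|\le HB^\eps\). We then move the \(z\)-integral outside via Fourier inversion of \(\hat\gamma\), which yields \(\int\phi\cdot\max_z\sum_\h|S_\h(a/q+z)|\ll (t+\lambda)\Sigma_H\). Taking square roots gives \eqref{eq:average}.

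The main obstacle is making this last step precise. The constraint on \(\h\) has to be converted into a genuine loss of one dimension, so that we get \((B/H)^{n-1}\) rather than \((B/H)^n\). At the same time, the oscillatory \(\hat\gamma\) factor has to be removed without losing absolute values. Following Marmon--Vishe, we would write \(\hat\gamma(\lambda\Delta_\h F)=\int\gamma(u)\e(u\Delta_\h F)\dint u\), exchange the order of summation and integration, and then bound \(|\sum_\x\cdots|\) by \(|S_\h(a/q+z+u)|\) pointwise. Smoothness of \(\omega\) handles the boundary effects.
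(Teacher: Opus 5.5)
Your framework matches the paper's: Gaussian smoothing in \(z\) at scale \(\lambda\), Cauchy--Schwarz in \(z\), the Fourier identity producing \(\exp(-\pi^2\lambda^2(\Delta_\h F(\x))^2)\), truncation using the lower bound on \(\partial_1F\), then the triangle inequality on the surviving shifts. Using a smoothed, slightly enlarged indicator instead of the paper's cover by \(O(1+t/\lambda)\) intervals of radius \(\lambda\) is harmless. The gap is the one step that produces the saving, the factor \((B/H)^{n-1}\), and what you write there does not work. If you difference over an isotropic box of side \(H\) as in your second step, every shift already has \(|h_1|\le H\). Then \(\lambda|\Delta_\h F(\x)|\ll1\), the Gaussian cuts nothing, and you are left with \((B/H)^{n}\). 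If instead you expand \(|S_\omega|^2\) completely as in your third step, then \(h_2,\ldots,h_n\) range up to size \(B\). The Gaussian only confines \(\x+\h\) to a neighbourhood of width about \(HB^\eps\) of the level set \(F=F(\x)\). That set depends on \(\x\) and is not the box \(|\h|\le HB^\eps\) in \(\Sigma_H\); moreover, the linear Taylor approximation you quote fails for \(|\h|\asymp B\). The sentence ``effectively \((B/H)^{n-1}\) blocks'' is exactly the unproved point. Your final paragraph on unfolding \(\hat\gamma\) only removes the oscillatory factor; it does not address the dimension count.

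The missing idea is an anisotropic differencing set, long in the direction where \(\partial_1F\) is bounded below: \(\mathcal H=([1,\delta B]\times[1,H]^{n-1})\cap\ZZ^n\), so \(\#\mathcal H\asymp BH^{n-1}\). Apply the van der Corput inequality \((\#\mathcal H)^2|\sum_{\x}f(\x)|^2\ll B^n\sum_{\h}N(\h)\sum_{\mathbf y}f(\mathbf y+\h)\overline{f(\mathbf y)}\) with \(N(\h)\le\#\mathcal H\). It gives the factor \(B^n/\#\mathcal H\asymp(B/H)^{n-1}\) directly, over shifts with \(|h_1|\le\delta B\) and \(|h_j|\le H\) for \(j\ge2\). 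Multiply this by the Gaussian and integrate in \(z\) before putting absolute values on the individual \(\h\)-terms. For \(|h_1|>HB^\eps\) one has \(\Delta_\h F(\x)=h_1\partial_1F(\x)+O(HB^{d-1}+(h_1^2+H^2)B^{d-2})\). Since \(h_1\) may be as large as \(\delta B\), where the quadratic term is comparable to the linear one, \(\delta\) must be chosen small in terms of \(c_1\) and the Taylor constant. This gives \(|\Delta_\h F(\x)|\ge\frac{c_1}{2}|h_1|B^{d-1}\), hence a factor at most \(\exp(-cB^{2\eps})\). The surviving shifts are exactly \(|h_1|\le HB^\eps\), \(|h_j|\le H\), which is the range of \(\Sigma_H\). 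The long \(h_1\)-side of \(\mathcal H\) is what buys the extra saving \((B/H)^{-1/2}\).
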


\begin{proof}
Let \(J\) be fixed.
Cover \(t\leq|z|\leq2t\) by \(O(1+t/\lambda)\) intervals \(I(z_0)\) of radius \(\lambda\), choosing every center \(z_0\) in the set \(\{z:t\leq|z|\leq2t\}\).
For each center \(z_0\), use \eqref{eq:exponential-sum} to put
\[
 \mathcal M(z_0)=\int_{\RR}
 \exp(-(z-z_0)^2/\lambda^2)
 |S_\omega(a/q+z;B)|^2\dint z.
\]
The Gaussian is \(\gg 1\) on \(I(z_0)\), so Cauchy--Schwarz gives
\begin{equation}\label{eq:interval-cauchy}
 \int_{I(z_0)}|S_\omega(a/q+z;B)|\dint z
 \ll\lambda^{1/2}\mathcal M(z_0)^{1/2}.
\end{equation}

We first record the van der Corput inequality for lattice sums in the form needed here.
If \(\mathcal H\subset\ZZ^n\) is finite and
\[
 \#\{\mathbf y\in\ZZ^n:f(\mathbf y+\mathbf u)\neq0
 \text{ for some }\mathbf u\in\mathcal H\}\ll B^n,
\]
then translation averaging followed by Cauchy--Schwarz gives
\begin{equation}\label{eq:general-vdc}
 \begin{aligned}
 (\#\mathcal H)^2\left|\sum_{\x\in\ZZ^n}f(\x)\right|^2
 &\ll B^n\sum_{\mathbf y\in\ZZ^n}
 \left|\sum_{\mathbf u\in\mathcal H}f(\mathbf y+\mathbf u)\right|^2\\
 &=B^n\sum_{\h\in\ZZ^n}N(\h)
 \sum_{\mathbf y\in\ZZ^n}
 f(\mathbf y+\h)\overline{f(\mathbf y)}.
 \end{aligned}
\end{equation}
where
\[
 N(\h)=\#\{(\mathbf u,\mathbf v)\in\mathcal H^2: \mathbf u-\mathbf v=\h\}.
\]

Choose \(\delta>0\) sufficiently small, depending only on \(F\) and \(\omega\).
Apply \eqref{eq:general-vdc} to
\[
 f_z(\x)=\omega(\x/B)\e((a/q+z)F(\x)),
\]
with
\[
\mathcal H=
 \bigl([1,\delta B]\times[1,H]^{n-1}\bigr)\cap\ZZ^n.
\]
Since \(\omega\) has fixed compact support and \(|\mathbf u|\ll B\) for \(\mathbf u\in\mathcal H\), this support condition holds.
After enlarging the implied constant we may assume that \(B\) is large enough that \(\#\mathcal H\asymp BH^{n-1}\); the bounded remaining values of \(B\) are harmless.
Moreover, \(N(\h)\leq\#\mathcal H\), and the correlation in \eqref{eq:general-vdc} is exactly \(S_\h(a/q+z;B)\).
The difference set is contained in \([ -\delta B,\delta B]\times[-H,H]^{n-1}\).
Multiply \eqref{eq:general-vdc} by the Gaussian and integrate in \(z\).
Using the triangle inequality and the bound on \(N(\h)\), we obtain
\begin{equation}\label{eq:gaussian-vdc}
 \mathcal M(z_0)\ll
 \left(\frac BH\right)^{n-1}\!
 \sum_{\substack{|h_1|\leq\delta B\\
                  |h_j|\leq H\\
                  2\leq j\leq n}}
 \left|\int_{\RR}\exp(-(z-z_0)^2/\lambda^2)
 S_{\h}(a/q+z;B)\dint z\right|.
\end{equation}
For \(u\in\RR\), we have the identity
\begin{equation*}
 \int_{\RR}\exp(-(z-z_0)^2/\lambda^2)
 \e((a/q+z)u)\dint z
 =\lambda\sqrt\pi
 \e((a/q+z_0)u)
 \exp(-\pi^2\lambda^2u^2).
\end{equation*}
Apply this with \(u=\Delta_\h F(\x)\) after expanding \(S_{\h}\) in \eqref{eq:first-difference-sum}.
The summand vanishes unless \(\omega(\x/B)\omega((\x+\h)/B)\neq0\), and on this support Taylor's formula applied to \eqref{eq:first-difference} gives
\begin{equation}\label{eq:first-difference-taylor}
 \Delta_\h F(\x)
 =h_1\partial_1F(\x)+O(HB^{d-1} + (h_1^2+H^2)B^{d-2}).
\end{equation}
For every shift in \eqref{eq:gaussian-vdc} with \(|h_1|>HB^\eps\), we have
\begin{equation*}
 HB^{d-1}+(h_1^2+H^2)B^{d-2}
 \ll(\delta+B^{-\eps})|h_1|B^{d-1}.
\end{equation*}
Choose \(\delta\) sufficiently small in terms of the constant in \eqref{eq:first-difference-taylor} and the lower bound \eqref{eq:lower-bound-partial-F}.
For all sufficiently large \(B\), the error term is then at most half the main term, and hence
\[
 |\Delta_\h F(\x)|\geq \frac{c_1}{2}|h_1|B^{d-1},
\]
for every such shift.
The remaining bounded values of \(B\) are absorbed into the final implied constant.
Hence the factor in the preceding Fourier transform satisfies
\[
 \exp(-\pi^2\lambda^2(\Delta_\h F(\x))^2)
 \leq\exp(-c\lambda^2|h_1|^2B^{2d-2})
 =\exp(-c|h_1|^2/H^2)
 \leq\exp(-cB^{2\eps}).
\]
Here \(c=\pi^2c_1^2/4>0\).
Choose an auxiliary exponent \(J_0=J_0(J,d,n)\) large enough that the subsequent polynomial losses leave an error \(O(B^{-J})\).
Since the sums over \(\x\) and \(\h\) have polynomial length, this bound shows that the shifts with \(|h_1|>HB^\eps\) contribute \(O_{J_0}(B^{-J_0})\).
The Gaussian weight in \eqref{eq:gaussian-vdc}, together with the trivial estimate \(|S_{\h}(a/q+z;B)|\ll B^n\), similarly shows that the range \(|z-z_0|>B^\eps\lambda\) contributes \(O_{J_0}(B^{-J_0})\).
For the remaining shifts and frequencies, the triangle inequality gives
\begin{equation*}
 \begin{aligned}
 &\sum_{\substack{|h_1|\leq HB^\eps\\
                  |h_j|\leq H\\
                  2\leq j\leq n}}
 \left|\int_{|z-z_0|\leq B^\eps\lambda}
 \exp(-(z-z_0)^2/\lambda^2)
 S_{\h}(a/q+z;B)\dint z\right|\\
 &\qquad\leq
 \int_{|z-z_0|\leq B^\eps\lambda}
 \exp(-(z-z_0)^2/\lambda^2)
 \sum_{|\h|\leq HB^\eps}|S_{\h}(a/q+z;B)|\dint z\\
 &\qquad\ll B^\eps\lambda\Sigma_H.
 \end{aligned}
\end{equation*}
Combining with \eqref{eq:gaussian-vdc} and the two tail estimates gives
\begin{equation}\label{eq:local-average}
 \mathcal M(z_0)
 \ll_{J_0} B^{-J_0}
 +B^\eps\lambda\left(\frac BH\right)^{n-1}\Sigma_H.
\end{equation}
Because every center lies in the original annulus, the truncated frequencies in the preceding display lie in the range used to define \(\Sigma_H\).
Finally, the cover has \(O(1+t/\lambda)\) intervals.
The main term obtained from \eqref{eq:interval-cauchy} and \eqref{eq:local-average} is
\[
 \ll B^\eps(t+\lambda)
 \left(\frac BH\right)^{(n-1)/2}\Sigma_H^{1/2}.
\]
Since \(t\leq1\) and \(\lambda^{-1/2}\leq B^{d/2}\), choosing \(J_0\) sufficiently large makes the summed error \(O(B^{-J})\).
This proves \eqref{eq:average}.
\end{proof}

\subsection{Uniform differencing}

Lemma~\ref{lem:average} produces a sum over the first shifts.
Since the later differencing choices may depend on the first shift, we need to estimate the resulting family uniformly.
The next proposition gives the required estimate.

For \(d\geq5\), define
\begin{equation}\label{eq:tau}
 \tau=2-2^{4-d}.
\end{equation}
For the parameters used below, define
\begin{equation}\label{eq:cubic-bound}
 K_B(\alpha,q,Q)
 =\frac{\sqrt q}{B}
 +\sqrt{\|q\alpha\|BQ^\tau}
 +\frac{Q^{\tau/6}\min\{c_q,d_qQ^\tau\}^{1/6}}
 {b_q^{1/2}(c_qd_q)^{1/3}}.
\end{equation}

\begin{proposition}\label{prop:cubic-family}
Let \(d\geq5\), \(B\geq2\), \(\eps>0\), and \(1\leq Q\leq B\).
Let \(q\) be a positive integer, and suppose that \(\|q\alpha\|=|q\alpha-a|\) for some integer \(a\) with \((a,q)=1\).
Suppose that
\[
 q\leq B^2,\qquad \|q\alpha\|\leq B^{-1},
\]
and \(K_B(\alpha,q,Q)\leq B^\eps Q^{-1}\).
Then, with \(H_1=Q^{1/2^{d-4}}\),
\begin{equation}\label{eq:cubic-family}
 \sum_{|\h|\leq H_1B^\eps}|S_{\h}(\alpha;B)|
 \ll B^{n+O_{d,n}(\eps)}H_1^\sigma.
\end{equation}
The implied constant depends only on \(F,\omega\) and \(\eps\).
\end{proposition}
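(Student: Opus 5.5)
Fix a shift \(\h\) with \(|\h|\leq H_1B^\eps\). The plan is to run the Browning--Prendiville argument \cite{bp} on the single sum \(S_\h(\alpha;B)\), uniformly in \(\h\), and then sum over \(\h\).

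\emph{Step 1: reduce to a cubic exponential sum.} The phase \(\Delta_\h F\) has degree \(d-1\), with a twisted weight \(\omega((\x+\h)/B)\omega(\x/B)\) that lies in the fixed smooth class noted above. We apply a further \(d-4\) Weyl/van der Corput differencing steps. The shift lengths are chosen as in \cite[Section~4]{bp}, so that together with \(H_1\) the product structure gives \(H_1=Q^{1/2^{d-4}}\). This reduces the problem to averages, over \((\h,\h_2,\ldots,\h_{d-3})\), of exponential sums whose phase is a cubic polynomial in \(\x\). Its leading part is the multilinear form \(\alpha\,\Delta_{\h}\Delta_{\h_2}\cdots\Delta_{\h_{d-3}}F\). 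The key point is that every estimate we use depends on \(\h\) only through \(|\h|\) and through the rank of the multilinear form. Hence the bound for a single \(\h\) holds uniformly, and summing over \(|\h|\leq H_1B^\eps\) costs only \(H_1^nB^{O(\eps)}\) times the per-shift bound.

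\emph{Step 2: apply the cubic estimate from \cite{bp}.} For the resulting cubic sums we invoke the final cubic (Heath-Brown style) estimate of \cite{bp}, with moduli factored as \(q=b_qc_q^2d_q\). Under the hypotheses \(q\leq B^2\) and \(\|q\alpha\|\leq B^{-1}\), this gives a bound of the shape
\[
 B^{n}\cdot(K_B(\alpha,q,Q)\,Q)^{(n-\dim)/2^{d-2}}\cdots,
\]
where \(\dim\) is the dimension of the locus of tuples \((\h,\h_2,\ldots)\) on which the associated multilinear forms drop rank. The exponent \(\tau=2-2^{4-d}\) and the three terms in \eqref{eq:cubic-bound} are exactly the quantities that arise there. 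The hypothesis \(K_B(\alpha,q,Q)\leq B^\eps Q^{-1}\) is chosen so that the saving coming from the cubic estimate is \(O(B^\eps)\). This leaves only a counting problem for the shift tuples.

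\emph{Step 3: count the singular shift tuples.} We use the standard Birch geometry: the number of \(\h\) with \(|\h|\leq H\) whose associated form is singular of a given rank is \(\ll H^{\sigma+\dots}\). Summing the Weyl-type bounds over \(\h\) in this way converts the factor \(H_1^n\) into \(H_1^\sigma\), once we use that \(|S_\h|\ll B^n\) trivially and that the differencing has normalised the main term to \(B^n H_1^{-n}\) per shift on average. This yields \eqref{eq:cubic-family}.

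The main obstacle is the uniformity in Step 2 for shifts \(\h\) that are themselves degenerate. The cubic estimate must be applied to the family \(\Delta_\h F\), not to a single form \(F\). I would first try to fold \(\h\) into the Birch singular-locus count as an extra differencing variable. In that treatment the relevant variety is \(\{(\h,\ldots):\text{rank drops}\}\), whose dimension is bounded by \(\sigma+\)(number of further shift variables), as in Birch's lemma. This replaces any need to select a good \(\h\), and it is precisely what makes the bound hold uniformly for all \(|\h|\leq H_1B^\eps\).
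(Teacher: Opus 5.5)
\textbf{There is a genuine gap: the degenerate first shifts are not handled.}

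Your Step~1 asserts that the bound for \(S_{\h}(\alpha;B)\) is uniform in \(\h\), so that summing over \(|\h|\leq H_1B^\eps\) costs only a factor \(H_1^n\). But the bound depends on the singular locus of \(F_{\h}^{[d-1]}=\h\cdot\nabla F\), and that dependence is exactly what breaks uniformity. Fix one first shift and write \(s_1(\h)=\dim\operatorname{Sing}_{\QQ}(F_{\h}^{[d-1]})\). Running the remaining \(r-1=d-4\) differencing steps with \(H_i^{2^{r-i}}=Q\), followed by \cite[Lemma~3.4]{bp}, then gives only \(|S_{\h}(\alpha;B)|\ll B^{n}H_1^{s_1(\h)-n}\) when \(s_1(\h)>\sigma\), ignoring \(p\)-adic factors and \(B^{O(\eps)}\). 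This is not a defect of the method. For \(F=x_1^d+\cdots+x_n^d\) (so \(\sigma=0\)), every \(\h\) with \(h_1=0\) gives a phase \(\Delta_{\h}F\) independent of \(x_1\). There are \(\gg H_1^{n-1}\) such shifts.

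Your Step~3 disposes of degenerate shifts by the trivial bound \(|S_{\h}|\ll B^n\). That yields only \(B^{n}H_1^{n-1}\) in this example, not the required \(B^nH_1^{\sigma}=B^n\). The hypothesis \(K_B(\alpha,q,Q)\leq B^\eps Q^{-1}\) also does not merely make a loss \(O(B^\eps)\). It gives the genuine saving \(|T_{\h_1}(\alpha;B)|/B^n\ll B^{O(\eps)}R_\zeta^{1/2}Q^{-(n-\zeta)}\) for the selected cubic sum. That saving, not a normalisation to \(B^nH_1^{-n}\) per shift, is the only source of \(H_1^{\sigma-n}\).

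\textbf{What is missing is the stratified balancing of Lemma~\ref{lem:uniform-selection}.} The paper covers the first shifts by boxes of side \(H_1\) and partitions each box into parts \(E\). On each part, the singular-locus dimensions at every stage are constant, over \(\QQ\) and over \(\mathbb F_p\) for \(p\mid 3b_qd_q\). The later shifts \(\h_i(\h_1)\) are selected by pigeonholing, not summed, and there are only \(O(B^\eps)\) parts because \(q\leq B^2\).

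By \eqref{eq:singular-shift-bound} and \eqref{eq:dimension-growth-bound}, together with the telescoping of \(H_i^{2^{r-i}}=Q\) and of the \(p\)-adic factors \(L_i\), the densities satisfy \(\delta_1^{2^{r-1}}\delta_2^{2^{r-2}}\cdots\delta_r\ll B^\eps Q^{\sigma-\zeta}R_\zeta^{-1/2}\). H\"older over \(E\) then gives \eqref{eq:uniform-family}. So the small density of degenerate shifts exactly cancels their weaker cubic bound, \(Q^{\sigma-\zeta}Q^{-(n-\zeta)}=Q^{\sigma-n}\). Likewise \(R_\zeta^{-1/2}\) cancels the \(R_\zeta^{1/2}\) lost in the cubic estimate; your proposal ignores this \(p\)-adic bookkeeping entirely.

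Folding \(\h\) into a Birch-type variety of rank-dropping tuples does not replace this. The later shifts are chosen rather than averaged, and they range over boxes of very different sizes \(H_i\). The cubic estimate is governed by \(\dim\operatorname{Sing}\) of the selected cubic part and by \(R_\zeta\), not by the vanishing of a multilinear form. A single dimension bound for such a tuple variety therefore does not yield the counts you need.
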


Fix \(d\geq5\), \(B\geq2\), \(\eps>0\), \(1\leq Q\leq B\), \(\alpha\in\RR\), and a positive integer \(q\leq B^2\).
Put \(r=d-3\), so that \(r\) differences reduce \(F\) to degree at most \(3\).
In the iterated Cauchy--Schwarz, the density introduced at stage \(i\) occurs with exponent \(2^{r-i}\).
For \(1\leq i\leq r\), put \(H_i=Q^{1/2^{r-i}}\), so we have
\begin{equation}
 H_i^{2^{r-i}}
 =Q.
 \label{eq:differencing-scale}
\end{equation}
The common scale makes the powers of the \(H_i\) telescope in the backward density calculation of \cite[Proof of Lemma~3.5]{bp}.
The product of the shift lengths controls the height of the cubic polynomial obtained after all \(r\) differences.
By \eqref{eq:tau}, it satisfies
\begin{equation}
 H_1\cdots H_r
 =Q^\tau.
 \label{eq:differencing-product}
\end{equation}
Given \(\h_1,\ldots,\h_i\), write the polynomial obtained after the first \(i\) differencing steps as
\begin{equation}
 F_{\h_1,\ldots,\h_i}
 =\Delta_{\h_i}\cdots\Delta_{\h_1}F.
 \label{eq:iterated-difference}
\end{equation}
Put \(\omega_\varnothing=\omega\), and use the recursion
\[
 \omega_{\h_1,\ldots,\h_i}(\mathbf y)
 =
 \omega_{\h_1,\ldots,\h_{i-1}}(\mathbf y+\h_i/B)
 \omega_{\h_1,\ldots,\h_{i-1}}(\mathbf y).
\]
The corresponding iterated sum is defined as follows.
\[
 S_{\h_1,\ldots,\h_i}(\alpha;B)
 =
 \sum_{\x\in\ZZ^n}
 \omega_{\h_1,\ldots,\h_i}(\x/B)
 \e\bigl(\alpha F_{\h_1,\ldots,\h_i}(\x)\bigr).
\]
Each \(\omega_{\h_1,\ldots,\h_i}\) is a product of at most \(2^r\) translates of \(\omega\) and contains the untranslated factor \(\omega(\mathbf y)\).
Thus these weights have common compact support and uniform derivative bounds depending only on \(d\) and \(\omega\).

For a form \(G\) over \(K=\QQ\) or \(\mathbb F_p\), define its singular locus over \(K\) by
\[
 \operatorname{Sing}_K(G)
 =
 \{\x\in\mathbb A_K^n:
 \partial_1G(\x)=\cdots=\partial_nG(\x)=0\}.
\]
Write \(\mathbb F_\infty=\QQ\).
For \(\mathbf a\in\ZZ^n\), let \([\mathbf a]_p\) denote its reduction in \(\mathbb F_p^n\), and put \([\mathbf a]_\infty=\mathbf a\).

We record two geometric bounds from \cite{bp} in the notation used below.
Let \(G\in\ZZ[x_1,\ldots,x_n]\) be a form of degree \(e\), let \(s\geq0\), and suppose that \(\nu=\infty\), or that \(\nu=p\) is a prime with \(p\nmid e\).
Put
\[
 B_\nu(G,s)
 =\{\mathbf y\in\mathbb A_{\mathbb F_\nu}^n:
 \dim\operatorname{Sing}_{\mathbb F_\nu}
 (\mathbf y\cdot\nabla G)\geq s\}.
\]
By \cite[Lemma~2.4]{bp}, this is an affine variety defined by \(O_e(1)\) equations of degree \(O_e(1)\), and
\begin{equation}\label{eq:singular-shift-bound}
 \dim B_\nu(G,s)
 \leq n-
 \bigl(s-\dim\operatorname{Sing}_{\mathbb F_\nu}(G)\bigr)_+.
\end{equation}

Let \(\mathcal P\) be a finite set of places among \(\infty\) and the rational primes.
For each \(\nu\in\mathcal P\), let \(X_\nu\subset\mathbb A_{\mathbb F_\nu}^n\) be an affine variety of dimension at most \(k_\nu\), defined by at most \(D\) equations of degree at most \(D\).
The dimension growth bound \cite[Lemma~2.5]{bp} gives, for \(T\geq1\),
\begin{equation}\label{eq:dimension-growth-bound}
 \begin{aligned}
 &\#\{\mathbf a\in\ZZ^n\cap[-T,T]^n:
 [\mathbf a]_\nu\in X_\nu\text{ for every }\nu\in\mathcal P\}\\
 &\qquad\leq A(D,n)^{|\mathcal P|}
 \sum_{\nu\in\mathcal P}T^{k_\nu}
 \prod_{\substack{\mu\in\mathcal P\\k_\mu<k_\nu}}
 \mu^{-(k_\nu-k_\mu)},
 \end{aligned}
\end{equation}
where \(\infty^{-1}\) is interpreted as \(0\).

For a polynomial \(g\) of degree at most three, let \(g^{[3]}\) denote its homogeneous cubic part.
For a nonnegative integer \(\zeta\), define
\[
 R_\zeta(g,q)
 =\prod_{\substack{p^e\Vert 3b_q\\
 \dim\operatorname{Sing}_{\mathbb F_p}(g^{[3]})>\zeta}}
 p^{e(\dim\operatorname{Sing}_{\mathbb F_p}(g^{[3]})-\zeta)},
\]
as in \cite[Eq.~(3.13)]{bp}.
It records the primes at which the singular locus of \(g^{[3]}\) over \(\mathbb F_p\) has dimension greater than \(\zeta\).

Lemma~\ref{lem:average} leaves a sum over all first shifts.
The proof of \cite[Lemma~3.5]{bp} works with only one class, but we continue the differencing argument uniformly over every class.
For a nonempty set \(E\subset\ZZ^n\setminus\{\boldsymbol 0\}\), allow the later shifts to depend on the first shift:
\[
 \h_i=\h_i(\h_1)\in\ZZ^n,
 \qquad |\h_i|\leq H_i,
\]
for \(\h_1\in E\) and \(2\leq i\leq r\).
For such a choice, write
\[
 G_{\h_1}=F_{\h_1,\ldots,\h_r},
 \qquad
 T_{\h_1}(\alpha;B)=S_{\h_1,\ldots,\h_r}(\alpha;B).
\]

The next lemma makes these choices uniformly on a partition of the first shifts.

\begin{lemma}\label{lem:uniform-selection}
Let \(I\) be a translate of a box of side length \(H_1\) contained in \(|\h|\ll H_1B^\eps\), let \(\alpha\in\RR\), and let \(q\leq B^2\) be a positive integer.
Then \((I\cap\ZZ^n)\setminus\{\boldsymbol 0\}\) can be partitioned into \(O_{d,n,\eps}(B^\eps)\) sets \(E\) with the following property.
For each part \(E\), we can choose an integer \(\zeta\geq\sigma\) and shifts \(\h_i(\h_1)\) as above so that
\[
 \dim\operatorname{Sing}_{\QQ}(G_{\h_1}^{[3]})\leq\zeta,
 \qquad \text{for } \h_1\in E.
\]
Moreover, for every prime \(p\mid3b_q\), the dimension of \(\operatorname{Sing}_{\mathbb F_p}(G_{\h_1}^{[3]})\) is independent of \(\h_1\in E\).
Hence \(R_\zeta(G_{\h_1},q)\) is also independent of \(\h_1\in E\) and we denote its common value by \(R_\zeta\).

With these choices, we have
\begin{equation}\label{eq:uniform-family}
 \sum_{\h_1\in E}
 \frac{|S_{\h_1}(\alpha;B)|}{B^n}
 \ll B^\eps H_1^n
 \left(
 Q^{\sigma-\zeta}R_\zeta^{-1/2}
 \max_{\h_1\in E}
 \frac{|T_{\h_1}(\alpha;B)|}{B^n}
 \right)^{1/2^{r-1}}.
\end{equation}
The implied constant depends only on \(F,\omega\) and \(\eps\).
\end{lemma}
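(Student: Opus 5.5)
We will run the iterated Weyl/van der Corput differencing of \cite[Proof of Lemma~3.5]{bp} separately for each first shift \(\h_1\in I\), and then group the first shifts according to the finitely many discrete outcomes of that argument. Fix \(\h_1\). For \(i=2,\ldots,r\) we apply the inequality \eqref{eq:general-vdc} to \(S_{\h_1,\ldots,\h_{i-1}}\) with the cube \(\mathcal H=[1,H_i]^n\cap\ZZ^n\). This gives
\[
 |S_{\h_1,\ldots,\h_{i-1}}|^2\ll B^nH_i^{-n}\sum_{|\h_i|\leq H_i}|S_{\h_1,\ldots,\h_i}|.
\]
Instead of bounding the sum over \(\h_i\) trivially, we stratify it as in \cite{bp}. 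Consider the sets \(B_\nu(\cdot,s)\) of \eqref{eq:singular-shift-bound} for the polynomial \(F_{\h_1,\ldots,\h_{i-1}}\), whose top-degree part is a derivative of \(F\) in the direction of the earlier shifts, at the places \(\nu\in\{\infty\}\cup\{p\mid 3b_q\}\). The dimension growth bound \eqref{eq:dimension-growth-bound} then counts the shifts \(\h_i\) whose singular-locus dimensions over \(\QQ\) and over each \(\mathbb F_p\) exceed given values. These values form a vector of integers in \([0,n]\) indexed by the places.

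For each such stratum we bound the sum by its cardinality times the maximum of the summand. We choose the \(\h_i\) that realises the largest product (count \(\times\) maximum), keeping the stratum. Unwinding the \(r-1\) Cauchy--Schwarz steps backwards produces the density factor. By \eqref{eq:differencing-scale}, the density lost at step \(i\) appears with exponent \(2^{r-i}\), so each loss \(H_i^{-(\text{codim})}\) becomes \(Q^{-(\text{codim})}\); this is exactly the telescoping of \cite{bp}. Together with the \(p\)-adic savings from \eqref{eq:dimension-growth-bound}, this gives
\[
 \frac{|S_{\h_1}|}{B^n}\ll B^\eps\Bigl(Q^{\sigma-\zeta}R_\zeta(G_{\h_1},q)^{-1/2}\frac{|T_{\h_1}|}{B^n}\Bigr)^{1/2^{r-1}},
\]
where \(\zeta\geq\sigma\) is the final \(\QQ\)-singular dimension and the chosen \(\h_i=\h_i(\h_1)\) satisfy \(\dim\operatorname{Sing}_\QQ(G_{\h_1}^{[3]})\leq\zeta\). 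The \(p\)-part of the bound is the product over \(p\mid3b_q\) of the saved factors, and this product is dominated by \(R_\zeta^{-1/2}\) as in \cite[Eq.~(3.13)]{bp}. The first shift \(\h_1\) itself is not stratified here: it is summed trivially, which gives the factor \(H_1^n\) in \eqref{eq:uniform-family}. Because of this, the \(\sigma\) saving must come entirely from the later steps. This is the point where \(Q^{\sigma-\zeta}\) should be tracked carefully through the backward calculation.

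Next we define the partition. To each \(\h_1\in I\) we attach the discrete data consisting of \(\zeta\) and, for each \(p\mid 3b_q\), the value \(\dim\operatorname{Sing}_{\mathbb F_p}(G_{\h_1}^{[3]})\in[0,n]\). The first shifts with identical data form one part \(E\). Since \(b_q\leq q\leq B^2\), the number of primes dividing \(3b_q\) is \(\omega(3b_q)\ll\log B/\log\log B\). Hence the number of possible data vectors is at most \((n+1)^{O(\log B/\log\log B)}\ll_{\eps}B^\eps\). Within each part, \(R_\zeta(G_{\h_1},q)\) is determined by the data, so it is constant and equal to \(R_\zeta\). Summing the pointwise bound over \(\h_1\in E\), with \(\#E\leq\#(I\cap\ZZ^n)\ll H_1^n\), and replacing each \(|T_{\h_1}|\) by the maximum over \(E\), yields \eqref{eq:uniform-family}.

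The main obstacle is the pointwise bound. We must check that the backward density calculation of \cite[Lemma~3.5]{bp} survives for every \(\h_1\) uniformly, including degenerate \(\h_1\) for which \(\dim\operatorname{Sing}(\h_1\cdot\nabla F)\) is large. The remedy is that this excess is absorbed by letting \(\zeta\) grow, and the loss stays harmless because we only need the inequality with the actual \(\zeta\) of each part. A second point is that the constants \(A(D,n)^{|\mathcal P|}\) in \eqref{eq:dimension-growth-bound} are also \(\ll B^\eps\), by the same bound on \(\omega(3b_q)\).
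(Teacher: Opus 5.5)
There is a genuine gap, and it sits exactly where you decline to stratify the first shift. The pointwise bound
\[
 \frac{|S_{\h_1}(\alpha;B)|}{B^n}\ll B^\eps\Bigl(Q^{\sigma-\zeta}R_\zeta^{-1/2}\frac{|T_{\h_1}(\alpha;B)|}{B^n}\Bigr)^{1/2^{r-1}}
\]
is false for degenerate $\h_1$. Once $\h_1$ is fixed, the steps $i=2,\ldots,r$ can only save relative to $s_1=\dim\operatorname{Sing}(\h_1\cdot\nabla F)$. So the backward calculation gives at best $Q^{s_1-\zeta}$. The $p$-adic savings likewise start from $\dim\operatorname{Sing}_{\mathbb F_p}(\h_1\cdot\nabla F)$ rather than from $\sigma$.

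For a counterexample, take $F=x_1^d+\cdots+x_n^d$ (so $\sigma=0$), $\alpha=0$, $Q=B^{1/2}$ and $\h_1=(1,0,\ldots,0)$. Every $F_{\h_1,\ldots,\h_i}$ depends on $x_1$ alone, so $\dim\operatorname{Sing}_\QQ(G_{\h_1}^{[3]})\geq n-1$ and your $\zeta\geq n-1$. On the other hand $|S_{\h_1}(0;B)|\asymp B^n$ and $|T_{\h_1}(0;B)|\ll B^n$, so your bound would force $1\ll B^\eps Q^{-(n-1)/2^{r-1}}$.

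Your proposed remedy runs the wrong way. Enlarging $\zeta$ shrinks $Q^{\sigma-\zeta}$, so it strengthens the claimed inequality rather than weakening it. After the cubic estimate, the product $Q^{\sigma-\zeta}Q^{-(n-\zeta)}=Q^{\sigma-n}$ would give a degenerate $\h_1$ the same saving as a generic one, which cannot hold pointwise.

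The missing idea is that the first-step saving $Q^{\sigma-s_1}=H_1^{-(s_1-\sigma)2^{r-1}}$ comes from the \emph{rarity} of degenerate first shifts, i.e.\ from the size of $E$. You throw this away by using $|E|\ll H_1^n$. The paper proceeds as follows.
\begin{enumerate}
\item It partitions the first shifts $\h_1$ according to the tuple $(\dim\operatorname{Sing}_{\mathbb F_\nu}(\h_1\cdot\nabla F))_\nu$.
\item It refines this partition by all later tuples, the places selected in the backward argument, and dyadic ranges of the later densities. There are still $O(B^\eps)$ parts.
\item It applies \eqref{eq:singular-shift-bound} with $G=F$, together with \eqref{eq:dimension-growth-bound}, to $\h_1$ in the translated box $I$. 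The translated varieties have the same complexity, which makes this uniform in $I$.
\end{enumerate}
This gives $|E|=H_1^n\delta_1$ with $\delta_1\ll B^\eps H_1^{-(s_1-\sigma)_+}$, up to $p$-adic factors, where $s_1$ is the first-stage dimension at the selected place. H\"older's inequality $\sum_E A_{\h_1}\leq|E|^{1-1/m}(\sum_E A_{\h_1}^m)^{1/m}$ with $m=2^{r-1}$ then moves $\delta_1$ inside the bracket with exponent $2^{r-1}$. That is precisely the first factor of the telescoping product $\prod_{i=1}^r\delta_i^{2^{r-i}}\ll B^\eps Q^{\sigma-\zeta}R_\zeta^{-1/2}$.

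For this to work, $E$ must lie in a single first-stage class, so partitioning only by the final data, as you propose, is not enough. A smaller omission: the primes $p\leq C$ must be removed from the set of places and absorbed into the implied constant. At those primes $\dim\operatorname{Sing}_{\mathbb F_p}(F)$ may exceed $\sigma$, or \eqref{eq:singular-shift-bound} may fail.
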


\begin{proof}
Put
\[
 \mathcal P=\{\infty\}\cup\{p:p\mid3b_qd_q\}.
\]
Choose \(C=C(F,d)\geq3d\) so that for every prime \(p>C\),
\[
 \dim\operatorname{Sing}_{\mathbb F_p}(F)=\sigma.
\]
Put
\[
 \mathcal P_C=\{\infty\}\cup\{p\in\mathcal P:p>C\}.
\]
Since all the forms below have degree at most \(d\), the bound \eqref{eq:singular-shift-bound} applies at every place in \(\mathcal P_C\).

The dimension growth bound \eqref{eq:dimension-growth-bound} is uniform for the translated box \(I\).
Indeed, choose \(\mathbf v\in\ZZ^n\) so that every \(\h_1\in I\cap\ZZ^n\) can be written as \(\h_1=\mathbf v+\mathbf y\), with \(|\mathbf y|\ll H_1\).
If \(X_\nu=B_\nu(G,s)\) is one of the varieties from \eqref{eq:singular-shift-bound} controlling \(\h_1\), then
\[
 [\mathbf v+\mathbf y]_\nu\in X_\nu
 \quad\Longleftrightarrow\quad
 [\mathbf y]_\nu\in X_\nu-[\mathbf v]_\nu.
\]
The translated variety \(X_\nu-[\mathbf v]_\nu\) has the same dimension, degree and number of defining equations as \(X_\nu\).
We may therefore apply \eqref{eq:dimension-growth-bound} to \(\mathbf y\) in a box of side length \(O(H_1)\), with a constant independent of the translate \(I\).

At the first stage, partition all shifts \(\h_1\in I\cap\ZZ^n\) according to the tuple
\[
 s_{1,\nu}
 =
 \dim\operatorname{Sing}_{\mathbb F_\nu}
 \bigl(F_{\h_1}^{[d-1]}\bigr),
 \qquad \nu\in\mathcal P.
\]
For \(\h_1\) in a fixed first stage class, let \(\Delta_1\) be the density of that class in \(I\).
Starting from each such \(\h_1\), perform the remaining \(r-1\) differencing steps.
At stage \(i\geq2\), partition the available shifts \(\h_i\) according to the tuple
\[
 s_{i,\nu}
 =
 \dim\operatorname{Sing}_{\mathbb F_\nu}
 \bigl(F_{\h_1,\ldots,\h_i}^{[d-i]}\bigr),
 \qquad \nu\in\mathcal P.
\]
Choose a class with the largest contribution and, within it, a representative maximizing the next differenced sum.
Let \(\Delta_i\) denote the proportion of available shifts in this class.

Refine the first stage partition according to all later tuples \((s_{i,\nu})\), the places selected in the backward dimension growth argument below, and dyadic ranges for \(\Delta_2,\ldots,\Delta_r\).
The shifts \(\h_2,\ldots,\h_r\) may depend on \(\h_1\), while the dimension tuples, selected places, and dyadic ranges are fixed on each resulting part.
This gives the asserted number of parts.
Indeed,
\[
 \#\mathcal P\leq2+\#\{p:p\mid q\},
\]
so, for any \(\eps_0>0\), the number of possible tuples of dimensions at all \(r\) stages is at most
\[
 (n+1)^{r \cdot \#\mathcal P}\ll_{d,n,\eps_0}q^{\eps_0}.
\]
The number of possible sequences of selected places is also \(O_{d,\eps_0}(q^{\eps_0})\).
Finally, every nonzero proportion \(\Delta_i\) satisfies \(\Delta_i\gg_n H_i^{-n}\) and \(\Delta_i\ll_n1\), and \(H_i\leq B\), so there are \(O_n(\log B)\) relevant dyadic ranges at each stage.
At this point choose \(\eps_0>0\) sufficiently small in terms of \(\eps,d,n\).
Since \(q\leq B^2\), the product of the two \(q^{\eps_0}\) losses and the finitely many logarithmic losses is \(O_{d,n,\eps}(B^\eps)\).
Thus the total number of parts is \(O_{d,n,\eps}(B^\eps)\).

Fix one part \(E\).
Let \(\delta_i\) be the upper endpoint of the dyadic range containing \(\Delta_i\), for \(2\leq i\leq r\), and put
\[
 \delta_1=\frac{|E|}{H_1^n}.
\]
Since \(\#(I\cap\ZZ^n)\asymp_n H_1^n\), the set \(E\) is contained in the class defined by its first tuple, and hence \(\delta_1\ll_n\Delta_1\), while \(\Delta_i\leq\delta_i\leq2\Delta_i\) for \(i\geq2\).
Put
\[
 m=2^{r-1},
 \qquad
 A_{\h_1}=\frac{|S_{\h_1}(\alpha;B)|}{B^n},
 \qquad
 U_{\h_1}=\frac{|T_{\h_1}(\alpha;B)|}{B^n}.
\]
For fixed \(\h_1\in E\), and with \(\h_2,\ldots,\h_r\) as selected above, put
\[
 A_i=\frac{|S_{\h_1,\ldots,\h_i}(\alpha;B)|}{B^n},
 \qquad 1\leq i\leq r,
\]
At stage \(i\), there are \(O(B^\eps)\) possible classes.
Choosing a class with the largest contribution and applying Cauchy--Schwarz gives
\[
 A_{i-1}^2\ll B^\eps\delta_iA_i,
 \qquad 2\leq i\leq r.
\]
Since \(r\) is fixed, and \(A_1=A_{\h_1}\) and \(A_r=U_{\h_1}\), iterating this inequality as in \cite[Eq.~(3.16)]{bp} gives
\[
 A_{\h_1}^{m}
 \ll B^\eps
 \prod_{i=2}^{r}\delta_i^{2^{r-i}}
 U_{\h_1}.
\]
H\"older's inequality now yields
\[
 \sum_{\h_1\in E}A_{\h_1}
 \leq |E|^{1-1/m}
 \left(\sum_{\h_1\in E}A_{\h_1}^m\right)^{1/m}
 \ll B^\eps |E|
 \left(
 \prod_{i=2}^{r}\delta_i^{2^{r-i}}
 \max_{\h_1\in E}U_{\h_1}
 \right)^{1/m}.
\]
Since \(|E|=H_1^n\delta_1\), this is
\[
 \sum_{\h_1\in E}\frac{|S_{\h_1}(\alpha;B)|}{B^n}
 \ll B^\eps H_1^n
 \left(
 \prod_{i=1}^{r}\delta_i^{2^{r-i}}
 \max_{\h_1\in E}
 \frac{|T_{\h_1}(\alpha;B)|}{B^n}
 \right)^{1/2^{r-1}}.
\]

It remains to estimate the product of densities, for which we follow the backward calculation in \cite[Proof of Lemma~3.5]{bp}.

Put
\[
 s_{0,\nu}
 =\dim\operatorname{Sing}_{\mathbb F_\nu}(F),
 \qquad s_0=\sigma=\dim\operatorname{Sing}_{\overline{\QQ}}(F).
\]
We now apply \eqref{eq:singular-shift-bound} and \eqref{eq:dimension-growth-bound} backwards.
Set
\[
 t_{r+1}=0,
 \qquad
 \mathcal P_C^{(r)}=\mathcal P_C.
\]
Thus
\[
 \mathcal P_C^{(r)}
 =\{\nu\in\mathcal P_C:s_{r,\nu}\geq t_{r+1}\}.
\]
For \(i=r,r-1,\ldots,1\), choose \(\nu_i\in\mathcal P_C^{(i)}\) indexing a largest term in the corresponding application of \eqref{eq:dimension-growth-bound}, and put
\[
 s_i=s_{i,\nu_i},
 \qquad
 t_i=s_{i-1,\nu_i},
 \qquad
 \mathcal P_C^{(i-1)}
 =\{\nu\in\mathcal P_C:s_{i-1,\nu}\geq t_i\}.
\]
The recursion ensures that
\[
 t_1=\sigma,
 \qquad
 s_i\geq t_{i+1},\quad\text{for }1\leq i<r.
\]
Indeed, the second assertion follows from \(\nu_i\in\mathcal P_C^{(i)}\), and the first follows because \(s_{0,\nu}=\sigma\) for every \(\nu\in\mathcal P_C\).

At stage \(i\), put
\[
 G_{i-1}=
 \begin{cases}
 F,&\text{for }i=1,\\
 F_{\h_1,\ldots,\h_{i-1}}^{[d-i+1]},&\text{for }i\geq2
 \end{cases}.
\]
Then
\[
 F_{\h_1,\ldots,\h_i}^{[d-i]}
 =\h_i\mathbin{\cdot}\nabla G_{i-1}.
\]
For every \(\nu\in\mathcal P_C^{(i)}\), the \(i\)-th shifts selected above therefore lie in \(B_\nu(G_{i-1},s_{i,\nu})\).
By \eqref{eq:singular-shift-bound}, this variety has dimension at most \(n-(s_{i,\nu}-s_{i-1,\nu})_+\).
Apply \eqref{eq:dimension-growth-bound} with \(T=H_i\), divide by the \(\gg H_i^n\) available shifts, choose the largest summand as above, and absorb the number of places into \(B^\eps\).
This gives
\[
 \Delta_i
 \ll B^\eps H_i^{-(s_i-t_i)_+}
 \prod_{\substack{p\in\mathcal P_C^{(i)},\ p<\infty\\
 s_{i,p}-s_{i-1,p}\geq(s_i-t_i)_+}}
 p^{-(s_{i,p}-s_{i-1,p})+(s_i-t_i)_+}.
\]

Define
\[
 \zeta_0=\sigma,
 \qquad
 \zeta_i=\zeta_{i-1}+(s_i-t_i)_+,
 \quad\text{for }1\leq i\leq r.
\]
By the argument in \cite[Proof of Lemma~3.5, following Eq.~(3.20)]{bp}, we have \(s_{i,p}\geq s_{i,\infty}\) for every \(p\in\mathcal P_C\).
Consequently \(s_i\geq s_{i,\infty}\), and an induction gives
\[
 \zeta_i\geq s_i.
\]
Indeed, \(\zeta_0=s_0=\sigma\).
If \(s_i\geq t_i\), then
\[
 \zeta_i\geq s_{i-1}+s_i-t_i\geq s_i;
\]
if \(s_i<t_i\), then \(\zeta_i=\zeta_{i-1}\geq s_{i-1}\geq t_i>s_i\).
Put \(\zeta=\zeta_r\), and define
\[
 L_i
 =
 \prod_{\substack{p\in\mathcal P_C,\ p<\infty\\
 s_{i,p}>\zeta_i}}
 p^{s_{i,p}-\zeta_i}.
\]
Comparing the exponent of each prime in the preceding dimension growth bound gives
\[
 \Delta_i
 \ll B^\eps
 H_i^{\zeta_{i-1}-\zeta_i}\frac{L_{i-1}}{L_i}.
\]
To see this comparison directly, put
\[
 a_p=s_{i,p}-\zeta_i,
 \qquad
 b_p=s_{i-1,p}-\zeta_{i-1}.
\]
If \(p\) occurs in the preceding product, then \(a_p\geq b_p\), and its exponent satisfies
\[
 b_p-a_p\leq(b_p)_+-(a_p)_+.
\]
If \(p\) does not occur, then either \(a_p<b_p\), or \(p\notin\mathcal P_C^{(i)}\).
In the latter case \(s_{i,p}<t_{i+1}\leq\zeta_i\), so \(a_p<0\).
In either case \(0\leq(b_p)_+-(a_p)_+\).
Thus extending the product to every finite \(p\in\mathcal P_C\) and replacing its exponent by \((b_p)_+-(a_p)_+\) can only increase it; the resulting product is \(L_{i-1}/L_i\).

Since \(s_{0,p}=\zeta_0=\sigma\) for \(p>C\), we have \(L_0=1\).
Using \eqref{eq:differencing-scale}, we consequently obtain
\[
 \begin{aligned}
 \Delta_1^{2^{r-1}}\Delta_2^{2^{r-2}}\cdots\Delta_r
 &\ll
 B^\eps
 \prod_{i=1}^r
 H_i^{(\zeta_{i-1}-\zeta_i)2^{r-i}}
 \prod_{i=1}^r
 \left(\frac{L_{i-1}}{L_i}\right)^{2^{r-i}}\\
 &\leq
 B^\eps Q^{\sigma-\zeta}L_r^{-1}.
 \end{aligned}
\]
Here the \(L_i\)-factors telescope in the form
\[
 \prod_{i=1}^r
 \left(\frac{L_{i-1}}{L_i}\right)^{2^{r-i}}
 =
 \frac{L_0^{2^{r-1}}}
 {L_1^{2^{r-2}}\cdots L_{r-1}L_r}
 \leq L_r^{-1}.
\]

The dimensions \(s_{r,p}\) are fixed on \(E\), so \(R_\zeta(G_{\h_1},q)\) is constant on \(E\), as asserted.
Moreover, for \(p>C\) and \(p^e\Vert3b_q\), we have \(e\leq2\).
The contribution of the finitely many primes \(p\leq C\) is bounded in terms of \(F,d,n\).
It follows that
\[
 R_\zeta\ll_{F,d,n}L_r^2,
 \qquad\text{and hence}\qquad
 L_r^{-1}\ll_{F,d,n}R_\zeta^{-1/2}.
\]
Therefore
\[
 \delta_1^{2^{r-1}}\delta_2^{2^{r-2}}\cdots\delta_r
 \ll B^\eps Q^{\sigma-\zeta}R_\zeta^{-1/2}.
\]
Combining this with the preceding application of H\"older's inequality proves \eqref{eq:uniform-family}.
\end{proof}

The selection lemma supplies the uniform family needed for the final cubic estimate, so we can now prove Proposition~\ref{prop:cubic-family}.
\begin{proof}[Proof of Proposition~\ref{prop:cubic-family}]
The zero shift satisfies \(|S_{\boldsymbol 0}(\alpha;B)|\ll B^n\leq B^nH_1^\sigma\), so it remains to estimate the nonzero shifts.
Cover \(0<|\h|\leq H_1B^\eps\) by \(O(B^{n\eps})\) translates of boxes of side \(H_1\).
Fix one covering box \(I\), and then fix one part \(E\) supplied by Lemma~\ref{lem:uniform-selection}.
Let \(\mathrm{Height}(g)\) be the largest absolute value of the coefficients of \(g\), and put
\[
 \mathrm{Height}_{B,3}(g)
 =\mathrm{Height}\bigl(B^{-3}g(B\x)\bigr).
\]
The iterated Cauchy--Schwarz used in the proof of Lemma~\ref{lem:uniform-selection} shows that \(T_{\h_1}(\alpha;B)=0\) implies \(S_{\h_1}(\alpha;B)=0\).
Such shifts make no contribution to \eqref{eq:uniform-family}.
For \(0\leq j\leq3\), a term of degree \(j\) in \(\x\) occurring in \(F_{\h_1,\ldots,\h_r}\), defined by \eqref{eq:iterated-difference}, has total degree \(d-j\) in the shifts, and every one of \(\h_1,\ldots,\h_r\) occurs with positive degree.
Consequently, if \(M=\max_i|\h_i|\), its coefficient is
\[
 \ll_F |\h_1|\cdots|\h_r|M^{3-j}.
\]
If \(T_{\h_1}\neq0\), the common support of the iterated weight gives \(|\h_1|\ll_\omega B\).
We also have \(|\h_1|\leq H_1B^\eps\) and \(|\h_i|\leq H_i\leq B\) for \(i\geq2\).
Hence \(M\ll_\omega B\), and \eqref{eq:differencing-product} shows that multiplying a coefficient of degree \(j\) by \(B^{j-3}\) gives
\begin{equation}\label{eq:final-height}
 \mathrm{Height}_{B,3}
 \bigl(G_{\h_1}\bigr)
 \ll_{F,\omega} B^\eps H_1\cdots H_r
 =B^\eps Q^\tau.
\end{equation}
In view of \eqref{eq:final-height}, choose a constant \(C_{F,\omega}\geq1\) and define the height parameter
\[
 H_0=C_{F,\omega}B^\eps Q^\tau,
\]
large enough that \(\mathrm{Height}_{B,3}(G_{\h_1})\leq H_0\) for every \(\h_1\in E\) with \(T_{\h_1}\neq0\).
Since \(Q\leq B\), this parameter satisfies \(H_0\leq B^{O_{d,\eps}(1)}\).
Moreover, it satisfies the following estimate.
\[
 \min\{c_q,d_qH_0\}
 \ll_{F,\omega}
 B^\eps\min\{c_q,d_qQ^\tau\}.
\]
We also have the following estimate.
\[
 H_0^{1/6}
 \ll_{F,\omega}
 B^{\eps/6}Q^{\tau/6}.
\]
The expression in \cite[Lemma~3.4]{bp} is therefore at most \(B^{O_d(\eps)}K_B(\alpha,q,Q)\).

Suppose first that \(\zeta<n\).
Apply the cubic Poisson estimate \cite[Lemma~3.4]{bp} with its height parameter \(H=H_0\), taking its arbitrary loss to be \(0<\eps_0\leq\eps\).
The uniform support and derivative bounds for the iterated weights verify its weight hypothesis.
We obtain
\[
 \max_{\h_1\in E}
 \frac{|T_{\h_1}(\alpha;B)|}{B^n} \ll
 B^{\eps_0+O_{d,n}(\eps)}R_\zeta^{1/2}
 K_B(\alpha,q,Q)^{n-\zeta}.
\]
The assumed bound on \(K_B(\alpha,q,Q)\) therefore gives
\begin{equation}\label{eq:final-family}
 \max_{\h_1\in E}
 \frac{|T_{\h_1}(\alpha;B)|}{B^n}
 \ll B^{O_{d,n}(\eps)}
 R_\zeta^{1/2}Q^{-(n-\zeta)}.
\end{equation}
If \(\zeta\geq n\), then \(R_\zeta=1\), and the trivial estimate
\[
 \max_{\h_1\in E}\frac{|T_{\h_1}(\alpha;B)|}{B^n}
 \ll_{d,\omega}1
\]
also proves \eqref{eq:final-family}, since \(Q^{\zeta-n}\geq1\).

Substituting \eqref{eq:final-family} into \eqref{eq:uniform-family} cancels the \(R_\zeta\)-factors and gives
\[
 \sum_{\h_1\in E}
 \frac{|S_{\h_1}(\alpha;B)|}{B^n}
 \ll B^{O_{d,n}(\eps)}H_1^n
 Q^{(\sigma-n)/2^{r-1}}
 =B^{O_{d,n}(\eps)}H_1^\sigma.
\]
Summing over the \(O_{d,n,\eps}(B^\eps)\) sets in each box and then over the \(O(B^{n\eps})\) covering boxes gives a total loss \(B^{O_{d,n}(\eps)}\), and proves \eqref{eq:cubic-family}.
\end{proof}

\section{Proof of Theorem~\ref{thm:main}}

We combine the three inputs of Section~\ref{sec:inputs} with the rational covering of Browning and Prendiville.
The averaged estimate leads to \(L_d\), and the pointwise interpolation leads to \(C_d\).
Before beginning the circle method argument, we group the remaining conditions from the pointwise estimates and the major arcs into one auxiliary threshold.

Recall \(A_d=3\cdot2^{d-3}-1\) from Lemma~\ref{lem:moduli}, and put
\[
 \begin{aligned}
 B_k&=(k-1)2^{d-1},
 &&\text{for }3\leq k\leq d,\\
 D_k&=k2^{d-1}+2^{d-k}-2,
 &&\text{for }3\leq k<d.
 \end{aligned}
\]
Direct expansion of \eqref{eq:intro-C} gives
\begin{equation}\label{eq:component-threshold}
 C_{d,k}
 =A_d+\frac dkB_k+\left(\frac dk-1\right)D_k,
 \qquad\text{for }3\leq k<d.
\end{equation}
Put \(\gamma=(3\cdot2^{d-2}-2)^{-1}\).
We next define the exponent \(\xi\) used in the rational covering.
\begin{equation}\label{eq:xi}
 \xi=\frac{5\cdot2^{d-2}-4}{3\cdot2^{d-2}-2}.
\end{equation}
We split the argument according to whether \(\eta_q>B^\gamma\) or \(\eta_q\leq B^\gamma\).
Define
\begin{equation}\label{eq:Ld}
 L_d
 =1+\frac{d-\frac12}{\gamma}
 =\frac34d2^d-2d-\left(3\cdot2^{d-3}-2\right).
\end{equation}
Put
\begin{equation}\label{eq:minor-arc-threshold}
 \Theta_d=\max\{L_d,C_d\}.
\end{equation}

Besides \(L_d\) and \(C_d\), the smooth Hasse argument uses six auxiliary bounds.
The first five make the remaining minor arc contributions \(O(B^{-d-\delta})\), and the sixth is the hypothesis for the major arc asymptotic.
If all six bounds are smaller than \(L_d\), then the condition \(n-\sigma>\Theta_d\) supplies every estimate used in the weighted asymptotic.
Under the nonsingular local solubility hypotheses, the leading constant in that asymptotic is positive, so the weighted count proves the smooth Hasse principle.

To identify the first five bounds, take a reduced approximation \(\alpha=a/q+z\) from \cite[Proof of Lemma~4.2]{bp}.
For such an approximation, we use the following frequency variable.
\begin{equation}\label{eq:minor-frequency}
 u=q|z|=\lVert q\alpha\rVert.
\end{equation}
By the normalized contribution of a dyadic cell, we mean its absolute contribution divided by \(B^n\), with factors \(B^\eps\) and dyadic logarithms suppressed.
Recall the definition of \(\eta_q\) in \eqref{eq:eta}. 
In \cite[Proposition~3.7]{bp}, \(\eta_q^{-1}\) is the height factor that will be kept throughout the downward comparison.
On a dyadic cell with \(\eta_q\leq B^\gamma\), write \(\eta_q\sim R\).
Lemma~\ref{lem:moduli}, summation over the reduced numerators, and integration over the cell give the common factor \(R^{A_d}u\) in its normalized contribution.
With \(\xi\) as in \eqref{eq:xi}, the rational covering also gives \(q\leq B^\xi\) and \(u\leq B^{-\xi}<1/2\).
Thus \(A_d\) replaces the exponent \(7\cdot2^{d-4}-\frac54\) used in \cite[Eq.~(4.2) and Lemma~4.4]{bp}.

We now record the six thresholds \(M_{d,1},\ldots,M_{d,6}\).
\begin{enumerate}
\item The final \(k=3\) term in \cite[Lemma~4.7 and its proof]{bp} is
\[
 \min\left\{R^{-1},u^{1/(5\cdot2^{d-2}-2)}\right\}.
\]
Using \(A_d\) powers of \(R^{-1}\) cancels the modulus factor.
Using a further \((d/\xi-1)(5\cdot2^{d-2}-2)\) powers of the second term leaves \(u^{d/\xi}\leq B^{-d}\).
Define \(M_{d,1}\) by
\begin{equation}\label{eq:Md-cubic}
 M_{d,1}
 =A_d+\left(\frac d\xi-1\right)(5\cdot2^{d-2}-2).
\end{equation}

\item The bound \cite[Eq.~(4.3)]{bp} supplies the two common pointwise factors
\[
 (uB)^{1/(2^{d-1}-2)}
 \qquad\text{and}\qquad
 (qB^{-2})^{1/2^{d-2}}.
\]
The identities \(\xi-1=\gamma(2^{d-1}-2)\) and \(2-\xi=\gamma2^{d-2}\) show that each gives the cell bound \(B^{\gamma A_d-\xi-\gamma(n-\sigma)}\), apart from the factor \(B^\eps\).
We define their common threshold \(M_{d,2}\) by comparison with \(B^{-d}\).
\begin{equation}\label{eq:Md-common}
 M_{d,2}
 =A_d+\frac{d-\xi}{\gamma}.
\end{equation}

\item For \(3\leq k\leq d\), the factor \((qB^{-k})^{1/B_k}\) in \cite[Eq.~(4.4) and Proof of Lemma~4.5]{bp} gives, apart from a factor \(B^\eps\), the cell bound
\[
 B^{\gamma A_d-\xi-(k-\xi)(n-\sigma)/B_k}.
\]
We define \(M_{d,3}\) to be the largest of the thresholds obtained by requiring this bound to be \(O(B^{-d-\delta})\).
\begin{equation}\label{eq:Md-k-dependent}
 M_{d,3}
 =\max_{3\leq k\leq d}
 \frac{B_k(d-\xi+\gamma A_d)}{k-\xi}.
\end{equation}

\item The bound \cite[Eq.~(4.4)]{bp} contains the Weyl factor \(B^{-2^{1-d}}\), which is independent of \(q\) and \(u\).
Its contribution is at most \(B^{-(n-\sigma)/2^{d-1}}\).
We define \(M_{d,4}\) to be the threshold that makes this contribution smaller than \(B^{-d}\).
\begin{equation}\label{eq:Md-Weyl}
 M_{d,4}
 =d2^{d-1}.
\end{equation}

\item The base case \(k=d\) in \cite[Eq.~(4.6) and Proof of Lemma~4.6]{bp} uses the factors \(R^{-1}\) and \((B^du)^{-1/B_d}\).
Using \(A_d\) powers of the first factor and \(B_d\) powers of the second gives
\[
 R^{A_d}uR^{-A_d}(B^du)^{-1}
 =B^{-d}.
\]
Define \(M_{d,5}\) by
\begin{equation}\label{eq:Md-degree-d}
 M_{d,5}
 =A_d+B_d.
\end{equation}

\item The major arc asymptotic in \cite[Lemma~5.1]{bp} assumes \(n-\sigma>\frac34(d-1)2^d\).
We define the major arc threshold \(M_{d,6}\) as follows.
\begin{equation}\label{eq:Md-major-arcs}
 M_{d,6}
 =\frac34(d-1)2^d.
\end{equation}
\end{enumerate}

The conditions from the induction step \(3\leq k<d\) have already been collected in \(C_d\).
We now define \(M_d\) to be the maximum of the six remaining thresholds.
\begin{equation}\label{eq:Md}
 M_d=\max_{1\leq i\leq6}M_{d,i}.
\end{equation}

\begin{lemma}\label{lem:threshold-absorption}
For every \(d\geq5\), the quantity \(M_d\) defined by \eqref{eq:Md} satisfies \(M_d<L_d\).
\end{lemma}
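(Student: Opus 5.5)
The plan is to verify the six inequalities \(M_{d,i}<L_d\) one at a time. Throughout, write \(N=2^{d-2}\), so that
\[
\gamma^{-1}=3N-2,\qquad \xi=\frac{5N-4}{3N-2},\qquad A_d=\tfrac32N-1,\qquad L_d=1+\left(d-\tfrac12\right)(3N-2)=3dN-2d-\tfrac32N+2 .
\]
The identity that makes several comparisons clean is \(\gamma A_d=\tfrac12\). Each \(M_{d,i}\) is an explicit rational function of \(d\) and \(N\). The aim is to show that \(L_d-M_{d,i}\) is positive, using only \(d\geq5\) and the elementary bound \(N=2^{d-2}\geq d\) for \(d\geq4\).

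\emph{The four easy cases.} For \(M_{d,6}\), \(M_{d,4}\), \(M_{d,5}\) and \(M_{d,2}\) I will compute the difference exactly:
\[
L_d-M_{d,6}=\tfrac32N-2d+2,\qquad L_d-M_{d,4}=dN-2d-\tfrac32N+2,
\]
\[
L_d-M_{d,5}=dN-N-2d+3,\qquad L_d-M_{d,2}=2N-1.
\]
For \(M_{d,2}\) I expand \((d-\xi)/\gamma=d(3N-2)-(5N-4)\). Each of the four right-hand sides is positive for \(d\geq5\). For example, the first equals \(4\) at \(d=5\), and it increases thereafter because \(N\) doubles while \(2d\) grows by \(2\).

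\emph{The case \(M_{d,1}\).} Using \((5N-2)/(5N-4)=1+2/(5N-4)\), I obtain
\[
\frac{d}{\xi}(5N-2)=3dN-2d+\frac{2d(3N-2)}{5N-4}.
\]
Subtracting, this gives
\[
L_d-M_{d,1}=2N+1-\frac{2d(3N-2)}{5N-4}>2N+1-2d>0,
\]
since \((3N-2)/(5N-4)<1\) and \(N\geq d\).

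\emph{The case \(M_{d,3}\).} First I reduce the maximum over \(k\). Since \(B_k=2(k-1)N\) and \(\xi>1\), the ratio \((k-1)/(k-\xi)\) is decreasing in \(k\), and the factor \(d-\xi+\gamma A_d=d+\tfrac12-\xi\) is positive. So the maximum is attained at \(k=3\). From \(3-\xi=(4N-2)/(3N-2)\) and \(\gamma A_d=\tfrac12\) I get
\[
M_{d,3}=(3N-2)\left(d+\tfrac12-\xi\right)\frac{2N}{2N-1}.
\]
Dividing by \(3N-2\), the target inequality follows from
\[
d-\tfrac12\;\geq\;\left(d+\tfrac12-\xi\right)\left(1+\frac1{2N-1}\right),
\]
which is equivalent to
\[
\xi-1=\frac{2N-2}{3N-2}\;\geq\;\frac{d+\frac12-\xi}{2N-1}.
\]
The extra \(1/(3N-2)\) coming from the \(+1\) in \(L_d\) then makes the final inequality strict.

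I expect this last step to be the only real obstacle, because it is the one comparison that is not an exact polynomial identity. To handle it, I will bound the left side below by \(\tfrac12\) for \(N\geq8\), and the right side above by \((d-1)/(2N-1)\leq d/(2N)\leq\tfrac12\cdot\tfrac{d}{N}<\tfrac12\). This uses \(\xi>\tfrac32\), which holds because \(5N-4>\tfrac32(3N-2)\), together with \(N\geq d\). At \(d=5\) I will check numerically as a sanity check: \(\xi=\tfrac{36}{22}\), the left side is about \(0.636\) and the right side about \(0.258\).

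Once all six differences are shown positive, \(M_d=\max_i M_{d,i}<L_d\) follows for every \(d\geq5\).
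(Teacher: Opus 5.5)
Your proposal is correct and follows the same route as the paper: you check $M_{d,i}<L_d$ separately for each $i$, using $\gamma A_d=\tfrac12$ and reducing $M_{d,3}$ to $k=3$ via the monotonicity of $(k-1)/(k-\xi)$, and your closed forms for $L_d-M_{d,i}$ agree with the paper's. The only differences are cosmetic: the paper chains $M_{d,5}<M_{d,4}<M_{d,6}<L_d$ and compares the $k=3$ value of $M_{d,3}/2^d$ with $\tfrac{3d}{4}-\tfrac78$, whereas you compare every threshold directly with $L_d$ and reduce the $M_{d,3}$ case to $\xi-1\geq (d+\tfrac12-\xi)/(2N-1)$.
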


\begin{proof}
The definitions \eqref{eq:Ld} and \eqref{eq:Md-common} give \(\gamma A_d=1/2\) and
\[
 L_d-M_{d,2}
 =2^{d-1}-1>0.
\]
Using \eqref{eq:Md-cubic}, direct simplification gives
\[
 L_d-M_{d,1}
 =\frac{10\cdot2^{2d-4}-(6d+3)2^{d-2}+4d-4}
 {5\cdot2^{d-2}-4}.
\]
Since \(2^{d-2}\geq2d-2\), the numerator is at least \(2^{d-2}(14d-23)+4d-4>0\).

For \(3\leq k\leq d\), we have
\[
 \frac{B_k(d-\xi+\gamma A_d)}{2^d(k-\xi)}
 =\frac{d+\frac12-\xi}{2} \cdot \frac{k-1}{k-\xi}.
\]
By \eqref{eq:xi} we know \(1<\xi<2\), so the last factor decreases with \(k\), and hence
\[
 \frac{B_k(d-\xi+\gamma A_d)}{2^d(k-\xi)}
 \leq\frac{3d}{4}-\frac78
 -\frac{2d-5}{8(2^{d-1}-1)}
 <\frac{3d}{4}-\frac78
 <\frac{L_d}{2^d},
\]
where the last inequality follows from
\[
 \frac{L_d}{2^d}
 -\left(\frac{3d}{4}-\frac78\right)
 =\frac12-\frac{d-1}{2^{d-1}}>0.
\]
Taking the maximum over \(k\) as in \eqref{eq:Md-k-dependent} gives \(M_{d,3}<L_d\).
Finally, the definitions \eqref{eq:Md-Weyl}, \eqref{eq:Md-degree-d}, and \eqref{eq:Md-major-arcs} give
\begin{align*}
 M_{d,4}-M_{d,5}
 &=2^{d-3}+1>0,\\
 M_{d,6}-M_{d,4}
 &=(d-3)2^{d-2}>0,\\
 L_d-M_{d,6}
 &=\frac{3\cdot2^{d-2}-4d+4}{2}>0.
\end{align*}
Thus \(M_{d,i}<L_d\) for \(1\leq i\leq6\), which proves the lemma.
\end{proof}

We now follow the minor arc decomposition in \cite[Section~4]{bp} and split the rational covering according to \(\eta_q\).
For \(\eta_q>B^\gamma\), we combine Lemma~\ref{lem:average} with Proposition~\ref{prop:cubic-family}; for \(\eta_q\leq B^\gamma\), we use their approach with Lemma~\ref{lem:moduli}.
Fix \(\Delta=1/6\).
With \(\xi\) as in \eqref{eq:xi} and \(u\) as in \eqref{eq:minor-frequency}, after the range of tiny frequencies in the first part of the rational covering in \cite[Proof of Lemma~4.2]{bp} is removed, the remaining arcs are contained in the union of the following two ranges.
\begin{equation}\label{eq:minor-range-first}
 (q,u)\in[B^\Delta,B^\xi]\times[B^{-d-2},B^{-\xi}],
\end{equation}
and
\begin{equation}\label{eq:minor-range-second}
 (q,u)\in[1,B^\xi]\times[B^{-d+\Delta},B^{-\xi}].
\end{equation}
For fixed \(q\) and a dyadic scale \(u_0\), the absolute contribution of the cell \(u\sim u_0\) is
\[
 \sum_{\substack{a\bmod q\\(a,q)=1}}
 \int_{u_0/q\leq|z|<2u_0/q}
 |S_\omega(a/q+z;B)|\dint z,
\]
with the final annulus truncated at \(q|z|\leq B^{-\xi}\).

For a fixed weight \(\omega\) as in Lemma~\ref{lem:average}, define \(E_{\mathrm{large}}(B)\) by summing the displayed absolute contributions of \(S_\omega\) over all pairs \((q,u_0)\) lying in one of the ranges \eqref{eq:minor-range-first} or \eqref{eq:minor-range-second}, and satisfying \(\eta_q>B^\gamma\).
Define \(E_{\mathrm{small}}(B)\) in the same way, with the restriction \(\eta_q\leq B^\gamma\).

We first estimate the contribution from \(\eta_q>B^\gamma\).
\begin{lemma}\label{lem:large-height}
If \(n-\sigma>L_d\), then \(E_{\mathrm{large}}(B)\ll B^{n-d-\delta}\) for some \(\delta>0\).
\end{lemma}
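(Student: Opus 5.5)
The plan is to treat each cell \((q,u_0)\) with \(\eta_q>B^\gamma\) by applying Lemma~\ref{lem:average} with \(t=u_0/q\) and \(H=H_1=Q^{1/2^{d-4}}\), and then to bound \(\Sigma_H\) by Proposition~\ref{prop:cubic-family}. For every \(z\) in the range defining \(\Sigma_H\), the point \(\alpha=a/q+z\) has \(\|q\alpha\|\asymp u_0\); this uses \(B^\eps\lambda\ll t\), which I will check holds for our choice of \(H\) and otherwise absorb into the \(\lambda\)-term. The proposition then gives \(\Sigma_H\ll B^{n+O(\eps)}H^\sigma\), provided \(q\leq B^2\), \(\|q\alpha\|\leq B^{-1}\) (both follow from the ranges \eqref{eq:minor-range-first}, \eqref{eq:minor-range-second}, since \(\xi>1\)) and \(K_B(\alpha,q,Q)\leq B^\eps Q^{-1}\). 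Summing over the \(\leq q\) reduced numerators, the normalized contribution of the cell becomes
\[
 \ll B^{O(\eps)}\,(u_0+q\lambda)\,B^{-1/2}\,H^{-(n-1-\sigma)/2}.
\]
Since \(u_0\leq1\) and \(q\lambda\leq B^{\xi-d+1}H^{-1}\leq 1\), both terms are harmless. What remains is to show that \(H^{-(n-1-\sigma)/2}B^{-1/2}\ll B^{-d-\delta}\), and then to sum over the \(O(B^{\eps})\) dyadic values of \(u_0\) and over \(q\leq B^\xi\), where I will group \(q\) into dyadic \(\eta_q\)-ranges and use Lemma~\ref{lem:moduli} only if needed.

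The choice of \(Q\) is the heart of the argument. I take \(Q\) as large as possible subject to \(1\leq Q\leq B\) and to each of the three terms of \eqref{eq:cubic-bound} being \(\leq Q^{-1}\). That is, I set
\[
 Q=\min\Bigl\{\tfrac{B}{\sqrt q},\ (u_0B)^{-1/(2+\tau)},\ Q_3(q)\Bigr\},
\]
where \(Q_3(q)\) is the largest \(Q\) with
\[
 Q^{1+\tau/6}\min\{c_q,d_qQ^\tau\}^{1/6}\leq b_q^{1/2}(c_qd_q)^{1/3}.
\]
The target is to show \(H_1\geq B^{2\gamma}\) in every case. Then \(H_1^{-(n-1-\sigma)/2}B^{-1/2}\leq B^{-\gamma(n-1-\sigma)-1/2}\), and the hypothesis \(n-\sigma>L_d=1+(d-\tfrac12)/\gamma\) makes this \(\ll B^{-d-\delta}\). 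Since \(H_1^{2^{d-4}}=Q\), the target is \(Q\geq B^{2^{d-3}\gamma}\).

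I verify this term by term.
\begin{itemize}
\item The first term gives \(B/\sqrt q\geq B^{1-\xi/2}\), and \(1-\xi/2=\gamma 2^{d-3}\) by the identity \(2-\xi=\gamma2^{d-2}\). So this term is exactly at the threshold.
\item The second term gives \(u_0B\leq B^{1-\xi}\), so \((u_0B)^{-1/(2+\tau)}\geq B^{(\xi-1)/(2+\tau)}\). Using \(\xi-1=\gamma(2^{d-1}-2)\) and \(2+\tau=4-2^{4-d}\), this equals \(B^{\gamma 2^{d-3}}\).
\item The third term is the only place where \(\eta_q>B^\gamma\) enters. I will split according to which entry of \(\min\{c_q,d_qQ^\tau\}\) is attained. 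The two resulting inequalities should unwind, up to the exponents \(2^d-2\) and \(5\cdot2^{d-2}-4\), to the two expressions inside the maximum in \eqref{eq:eta}. This would give \(Q_3(q)\geq \eta_q^{\,2^{d-3}}\cdot B^{-O(\eps)}\) or better, which exceeds \(B^{2^{d-3}\gamma}\).
\end{itemize}
Finally I must check \(Q\leq B\), which is automatic from the min, and \(H=H_1\leq B\), which follows.

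The main obstacle I expect is this third-term computation: confirming that \(\tau=2-2^{4-d}\) and the exponents in \eqref{eq:eta} match precisely, so that \(\eta_q>B^\gamma\) yields \(Q_3\geq B^{2^{d-3}\gamma}\) in both branches of the minimum. If only a weaker power comes out, I would fall back on dyadically decomposing \(\eta_q\sim R\geq B^\gamma\) and trading the surplus against the count \(R^{A_d}\) from Lemma~\ref{lem:moduli}, noting that \(\gamma A_d=1/2\) is exactly the spare \(B^{-1/2}\) factor that appears above.
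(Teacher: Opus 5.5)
Most of your plan is the paper's argument. You apply Lemma~\ref{lem:average} with \(t=u_0/q\), bound \(\Sigma_H\) by Proposition~\ref{prop:cubic-family}, and reduce to \(H_1\geq B^{2\gamma}\) via \(1-\xi/2=(\xi-1)/(2+\tau)=2^{d-3}\gamma\). The third term you flagged as the main obstacle works out exactly. After raising to the sixth power, the condition reads \(\min\{Q^{6+\tau}/(b_q^3c_qd_q^2),\,Q^{6+2\tau}/(b_q^3c_q^2d_q)\}\leq1\). The identities \((6+\tau)2^{d-3}=2^d-2\) and \((6+2\tau)2^{d-3}=5\cdot2^{d-2}-4\) make this equivalent to \(Q\leq\eta_q^{2^{d-3}}\). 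The paper does not optimise \(Q\) per cell; it fixes \(Q_0=B^{2^{d-3}\gamma}\) and \(H_1=B^{2\gamma}\) everywhere. That loses nothing, because in the dominant cells (\(q\approx B^\xi\), \(u_0\approx B^{-\xi}\)) your first two entries sit exactly at \(Q_0\).

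The genuine gap is the summation over cells. You call \(u_0+q\lambda\) harmless because \(u_0\leq1\) and \(q\lambda\leq1\), and then sum a per-cell bound \(B^{-d-\delta}\) over \(q\leq B^\xi\). This loses a factor \(B^\xi\). You get \(E_{\mathrm{large}}(B)\ll B^{n-d-\mu+\xi+O(\eps)}\) with \(\mu=\gamma(n-\sigma-L_d)\), which needs \(n-\sigma>L_d+\xi/\gamma\) rather than \(n-\sigma>L_d\).

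Your fallback does not repair this. The moduli with \(\eta_q>B^\gamma\) really number \(\asymp B^\xi\): every squarefree \(q\) near \(B^\xi\) has \(\eta_q=q^{3/(2^d-2)}>B^\gamma\). So Lemma~\ref{lem:moduli} gives nothing there. Nor is the factor \(B^{-1/2}\) spare, since it is exactly what produces the \(-\tfrac12\) in \(L_d=1+(d-\tfrac12)/\gamma\).

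The missing observation is that both ranges \eqref{eq:minor-range-first} and \eqref{eq:minor-range-second} have \(u_0\leq B^{-\xi}\). Hence \(\sum_{q\leq B^\xi}\sum_{u_0\in U(q)}u_0\ll B^\xi\cdot B^{-\xi}=1\); the arcs have total length \(O(1)\). Meanwhile \(\lambda\sum_{q\leq B^\xi}q\,\#U(q)\ll B^{2\xi-(d-1)-2\gamma+\eps}\) is a negative power of \(B\), because \(\kappa=d-1+2\gamma-2\xi>0\). With this bookkeeping the total is \(B^{n-\frac12-\gamma(n-\sigma-1)+O(\eps)}=B^{n-d-\mu+O(\eps)}\), as required.

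A smaller inaccuracy: \(\|q\alpha\|\asymp u_0\) is false for small \(u_0\). For example, at \(u_0\approx B^{-d-2}\) one has \(qB^\eps\lambda\gg u_0\), so \(B^\eps\lambda\ll t\) cannot be verified. Only the upper bound \(u'\ll u_0+qB^\eps\lambda\ll B^{-\xi}\) is needed. With the fixed \(Q_0\), this immediately makes the second term of \(K_B\) equal to \(O(Q_0^{-1})\). With your \(u_0\)-dependent \(Q\), you must recheck that term at \(u'\approx qB^\eps\lambda\). It does survive up to \(B^{\eps}\), since \(Q\leq B/\sqrt q\) gives \(qB\lambda Q^{2+\tau}\leq B^{6-d-2^{5-d}}\leq1\). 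This is one more reason to prefer the fixed \(Q_0\).
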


\begin{proof}
We use the following two scales.
\[
 Q_0=B^{2^{d-3}\gamma},\qquad
 H_1=Q_0^{1/2^{d-4}}=B^{2\gamma}.
\]
Since \(d\geq5\), both exponents \(2^{d-3}\gamma\) and \(2\gamma\) lie in \((0,1)\), so \(1\leq Q_0,H_1\leq B\).
Equation~\eqref{eq:xi} gives \(1<\xi<2\), and hence
\[
 \kappa=d-1+2\gamma-2\xi>d-5+2\gamma>0.
\]
Put \(\mu=\gamma(n-\sigma-L_d)>0\).
Fix a constant \(C = C(d, n)>0\), so that each total exponent denoted by \(O_{d,n}(\eps)\) below is at most \(C\eps\), and choose
\[
 0<\eps<\min\{\kappa,2-\xi,\mu/(2C)\}.
\]
Let \(u\sim u_0\), where \(u_0\) runs over dyadic frequency scales.
Apply Lemma~\ref{lem:average} with \(H=H_1\) and
\[
 t=\frac{u_0}{q},\qquad
 \lambda=(H_1B^{d-1})^{-1}.
\]
For \(z\) in the enlarged annulus in \eqref{eq:shift-average}, put \(u'=q|z|\).
Since
\[
 qB^\eps\lambda
 \ll B^{\xi+\eps-(d-1)-2\gamma}
 \ll B^{-\xi},
\]
the same reduced numerator still realizes the distance to the nearest integer, and \(u'\ll B^{-\xi}\).
Since \(q\leq B^\xi\leq B^{2-\eps}\) and \(u'\leq B^{-1}\) for sufficiently large \(B\), all the remaining size hypotheses of Proposition~\ref{prop:cubic-family} are satisfied.
It remains to check the condition involving \(K_B\) in that proposition.
By \eqref{eq:cubic-bound}, and since \(\|q(a/q+z)\|=u'\), we have
\[
\frac{\sqrt q}{B}
+\sqrt{u'BQ_0^\tau}
 + \frac{Q_0^{\tau/6}\min\{c_q,d_qQ_0^\tau\}^{1/6}}{b_q^{1/2}(c_qd_q)^{1/3}}.
\]
Equation~\eqref{eq:xi}, the definition of \(\gamma\), and \eqref{eq:tau} give the identities
\[
 1-\frac\xi2
 =\frac{\xi-1}{2+\tau}
 =2^{d-3}\gamma.
\]
These bound the first two terms in the display above by \(O(Q_0^{-1})\).

Raising the third term to the sixth power gives
\begin{equation}\label{eq:large-height-third-term}
 \min\left\{
 \frac{Q_0^\tau}{b_q^3c_qd_q^2},
 \frac{Q_0^{2\tau}}{b_q^3c_q^2d_q}
 \right\}.
\end{equation}
Equation~\eqref{eq:tau} also gives the following identities.
\[
 (6+\tau)2^{d-3}=2^d-2,\qquad
 (6+2\tau)2^{d-3}=5\cdot2^{d-2}-4.
\]
Together with the definition of \(\eta_q\), these identities show that \eqref{eq:large-height-third-term} is \(O(Q_0^{-6})\) when \(\eta_q>B^\gamma\).
Since \(\eps\) is fixed, the sum of these three bounds is at most \(B^\eps Q_0^{-1}\) for all sufficiently large \(B\).
Proposition~\ref{prop:cubic-family} therefore applies throughout the enlarged annulus.
It gives the following uniform estimate.
\[
 \Sigma_{H_1}
 \ll B^{n+O_{d,n}(\eps)}H_1^\sigma.
\]
The main term in Lemma~\ref{lem:average} consequently has the following form.
\[
 (t+\lambda)
 B^{n-\frac12+O_{d,n}(\eps)}
 H_1^{(\sigma-n+1)/2}
 =
 (t+\lambda)
 B^{n-\frac12-\gamma(n-\sigma-1)+O_{d,n}(\eps)}.
\]
After summing over the reduced numerators, the two terms \(t=u_0/q\) and \(\lambda\) contribute \(u_0\) and \(q\lambda\), respectively.
Let \(U(q)\) be the set of dyadic scales \(u_0\) for which \((q,u_0)\) lies in one of the ranges \eqref{eq:minor-range-first} or \eqref{eq:minor-range-second}.
Then
\[
 \#U(q)\ll\log B,
 \qquad
 \sum_{u_0\in U(q)}u_0\ll B^{-\xi}.
\]
Thus the first term satisfies the explicitly dyadic sum
\[
 \sum_{q\leq B^\xi}
 \sum_{u_0\in U(q)}u_0
 \ll B^\xi B^{-\xi}\ll1.
\]
For the second term, summing over the dyadic set \(U(q)\) gives
\[
 \lambda
 \sum_{q\leq B^\xi}
 \sum_{u_0\in U(q)}q
 \ll
 (\log B)\lambda
 \sum_{q\leq B^\xi}q
 \ll
 B^{-(d-1+2\gamma-2\xi)+\eps}.
\]
The \(B^{-J}\) term in Lemma~\ref{lem:average} contributes at most \(B^{2\xi-J+\eps}\) after these sums and is negligible when \(J\) is sufficiently large.
It follows that the total contribution from \(\eta_q>B^\gamma\) satisfies the estimate below.
\begin{equation*}
 E_{\mathrm{large}}(B)
 \ll B^{n-1/2-\gamma(n-\sigma-1)+O_{d,n}(\eps)}
 =B^{n-d-\mu+O_{d,n}(\eps)}
 \ll B^{n-d-\mu/2}.
\end{equation*}
This is the asserted fixed saving.
\end{proof}

We next estimate the contribution from \(\eta_q\leq B^\gamma\).
\begin{lemma}\label{lem:small-height}
If \(n-\sigma>\Theta_d\), then \(E_{\mathrm{small}}(B)\ll B^{n-d-\delta}\) for some \(\delta>0\).
\end{lemma}

\begin{proof}
Choose \(\eps>0\) sufficiently small in terms of \(d,n\) and the fixed gap \(n-\sigma-\Theta_d\).
Write \(\eta_q\sim R\leq B^\gamma\) and \(u\sim u_0\).
Lemma~\ref{lem:moduli} gives \(O(B^\eps R^{A_d})\) possible moduli with \(\eta_q\sim R\), after absorbing its logarithmic factor.
For each such \(q\), there are at most \(q\) reduced numerators, while the corresponding \(z\)-annulus has length \(O(u_0/q)\); their product is \(O(u_0)\).
Thus, if \(X(q,u)\) is any pointwise factor under consideration, its normalized contribution from this dyadic cell is
\[
 \ll B^\eps R^{A_d}u_0
 \sup_{\substack{\eta_q\sim R\\u\sim u_0}}
 X(q,u)^{n-\sigma}.
\]

Apply the pointwise estimates \cite[Propositions~3.6 and~3.7]{bp} to the expressions in \cite[Eq.~(4.3) and Eq.~(4.4)]{bp}.
If \(X_1,\ldots,X_m\) are the finitely many nonnegative factors in one of these expressions, then
\[
 (X_1+\cdots+X_m)^{n-\sigma}
 \leq m^{n-\sigma-1}\sum_{j=1}^m X_j^{n-\sigma}.
\]
It therefore suffices to estimate the contribution of each factor separately.
By the definitions of \(M_{d,2}\), \(M_{d,3}\), and \(M_{d,4}\), each factor covered by \eqref{eq:Md-common}, \eqref{eq:Md-k-dependent}, or \eqref{eq:Md-Weyl} contributes \(O(B^{n-d-\delta})\) to the present dyadic cell for some \(\delta>0\), whenever \(n-\sigma>\max\{M_{d,2},M_{d,3},M_{d,4}\}\).

For the remaining terms, we keep \(\eta_q^{-1}\) from \cite[Eq.~(4.3)]{bp} throughout the downward comparison in \cite[Eq.~(4.6) and Proofs of Lemmas~4.6 and~4.7]{bp}.
Put
\[
 Y_k=u^{1/(B_k+2^{d+1-k}-2)},
 \qquad
 Z_k=(B^ku)^{-1/B_k}.
\]
After separating the terms covered by \eqref{eq:Md-common}, \eqref{eq:Md-k-dependent}, and \eqref{eq:Md-Weyl}, \cite[Eq.~(4.3)]{bp} contributes \(\eta_q^{-1}\), while \cite[Eq.~(4.4)]{bp} contributes \(Y_k+Z_k\) for each \(k\), since its final minimum is at most \(Z_k\).
Taking the minimum over these alternatives and using \(\eta_q^{-1}\asymp R^{-1}\), the remaining pointwise factor is, up to a constant depending only on \(d\), bounded by
\begin{equation}\label{eq:downward-factor}
 \min\left\{R^{-1},\min_{3\leq k\leq d}(Y_k+Z_k)\right\}.
\end{equation}

Put \(A=R^{-1}\).
Repeatedly splitting \(Y_k+Z_k\) in \eqref{eq:downward-factor}, from \(k=d\) down to \(k=3\), gives
\begin{equation}\label{eq:downward-decomposition}
 \begin{aligned}
 &\min\left\{A,\min_{3\leq k\leq d}(Y_k+Z_k)\right\}\\
 &\quad\leq \min\{A,Z_d\}
 +\min\left\{A,Y_d,\min_{3\leq k<d}(Y_k+Z_k)\right\}\\
 &\quad\leq \min\{A,Z_d\}
 +\sum_{k=3}^{d-1}\min\{A,Y_{k+1},Z_k\}
 +\min\{A,Y_3\}.
 \end{aligned}
\end{equation}
At level \(k\), \(Y_{k+1}\) is an entry of the remaining minimum.
After splitting \(Y_k+Z_k\), the assumption \(0<u\leq1\) gives \(Y_k\leq Y_{k+1}\) and we replace \(Y_{k+1}\) by \(Y_k\) at the next level.
Thus \eqref{eq:downward-decomposition} reduces \eqref{eq:downward-factor} to the base term \(\min\{A,Z_d\}\), the terms \(\min\{A,Y_{k+1},Z_k\}\) for \(3\leq k<d\), and the final term \(\min\{A,Y_3\}\).
For each such factor \(X\), it remains to bound the normalized cell contribution
\[
 B^\eps R^{A_d}uX^{n-\sigma}.
\]

For the base case \(k=d\), Lemma~\ref{lem:threshold-absorption} gives \(n-\sigma>M_{d,5}=A_d+B_d\).
Since \(Z_d^{B_d}=(B^du)^{-1}\) and a minimum is no larger than either of its entries,
\[
 R^{A_d}u
 \min\{R^{-1},Z_d\}^{n-\sigma}
 \leq
 B^{-d}
 \min\{R^{-1},Z_d\}^{n-\sigma-M_{d,5}}.
\]
On the range \eqref{eq:minor-range-first}, \eqref{eq:eta} and \(q\geq B^\Delta\) give \(R\gg B^{\Delta/(5\cdot2^{d-2}-4)}\), so the remaining minimum gives a fixed saving through its first entry.
On the range \eqref{eq:minor-range-second}, we have \(B^du\geq B^\Delta\), so its second entry gives a fixed saving.

For the final term \(\min\{R^{-1},Y_3\}\), Lemma~\ref{lem:threshold-absorption} gives \(n-\sigma>M_{d,1}\).
The definition \eqref{eq:Md-cubic} similarly gives
\[
R^{A_d}u
 \min\{R^{-1},Y_3\}^{n-\sigma}\leq
 u^{d/\xi}
 \min\{R^{-1},Y_3\}^{n-\sigma-M_{d,1}}.
\]
Here \(u^{d/\xi}\leq B^{-d}\), and the second entry of the remaining minimum gives a fixed saving.

In the induction step \(3\leq k<d\), use \(A_d\), \(dB_k/k\), and \((d/k-1)D_k\) powers of the three entries in the corresponding minimum.
Using \eqref{eq:component-threshold} and assigning the remaining powers to \(Y_{k+1}=u^{1/D_k}\), we obtain
\[
 \begin{aligned}
 R^{A_d}u
 \min\{R^{-1},Z_k,Y_{k+1}\}^{n-\sigma}
 & \leq B^{-d} \cdot u^{(n-\sigma-C_{d,k})/D_k}\\
 &
 \leq B^{-d-\xi(n-\sigma-C_{d,k})/D_k}.
 \end{aligned}
\]
Finally, \(n-\sigma>\Theta_d\) handles every induction step by \eqref{eq:component-threshold}, and \(n-\sigma>L_d>M_d\) handles the base case, the final \(k=3\) term, and all remaining pointwise terms by Lemma~\ref{lem:threshold-absorption}.
Every inequality is strict, so the resulting fixed margins absorb the dyadic logarithms and the \(B^\eps\) losses.
This proves the lemma.
\end{proof}

The preceding two estimates now control all the remaining minor arcs, so we can complete the proof of the main theorem.
\medskip

\begin{proof}[Proof of Theorem~\ref{thm:main}]
For \(d=5,6\), equation \eqref{eq:Ld} gives \(L_5=100\) and \(L_6=254\).
In particular, \(L_d\in\ZZ\), so the condition \(n-\sigma>L_d\) is equivalent to \(n-\sigma\geq L_d+1\), which is precisely the first lower bound in Theorem~\ref{thm:main}.
Direct substitution also gives \(L_d>C_d\).
For \(d\geq7\), we have \(C_d\geq C_{d,3}\) and
\[
 C_{d,3}-L_d
 =(d-7)2^{d-3}+\frac{4d}{3}-1>0.
\]
Thus the hypothesis of Theorem~\ref{thm:main} implies
\[
 n-\sigma>\Theta_d,
\]
in every degree \(d\geq5\).
Let \(\x_0\) and \(\omega\) be as in Section~\ref{sec:averaging}.
Under the condition \(n-\sigma>\Theta_d\), with \(\Theta_d\) defined in \eqref{eq:minor-arc-threshold}, we shall prove the following weighted asymptotic for some \(\delta>0\):
\begin{equation}\label{eq:weighted-asymptotic}
 \sum_{\substack{\x\in\ZZ^n\\F(\x)=0}}\omega(\x/B)
 =\mathfrak S\mathfrak I B^{n-d}
 +O(B^{n-d-\delta}).
\end{equation}
Here \(\mathfrak S\) and \(\mathfrak I\) are the singular series and singular integral.
We use the rational covering, pointwise estimates, and major arcs from \cite[Sections~3--5]{bp}.
In the first covering range, the trivial estimate removes \(u<B^{-d-2}\) with total contribution \(O(B^{n-d-(2-\xi)+\eps})\), exactly as in \cite[Proof of Lemma~4.2]{bp}.
By~\eqref{eq:xi} we know \(\xi<2\), so this is a power saving when \(\eps\) is sufficiently small.
Lemmas~\ref{lem:large-height} and~\ref{lem:small-height} give a power saving on all the remaining minor arcs.
Moreover, Lemma~\ref{lem:threshold-absorption} gives
\[
 n-\sigma>M_{d,6}
 =\frac34(d-1)2^d,
\]
which is the hypothesis required for the major arcs by \cite[Lemma~5.1]{bp}.
The major arc formula now gives \eqref{eq:weighted-asymptotic}.
The conditions \(\omega(\x_0)>0\) and \(\partial_1F\neq0\) on its support imply \(\mathfrak I>0\) by the implicit function theorem.
Nonsingular \(p\)-adic solubility implies \(\mathfrak S>0\), as in the discussion following \cite[Lemma~5.1]{bp}.
For a sufficiently large integer \(B\), the weighted count is positive and therefore supplies \(\x\in\ZZ^n\) with \(F(\x)=0\) and \(\x/B\in\operatorname{supp}(\omega)\).
By homogeneity, \(\x/B\) is a rational zero of \(F\) in \(\operatorname{supp}(\omega)\), and it is nonsingular by the choice of support.
\end{proof}

We now estimate the bound \(C_d\) as \(d\) grows.
\medskip

\begin{proof}[Proof of Corollary~\ref{cor:asymptotic}]
For \(3\leq k<d\), substitution in \eqref{eq:intro-C} gives
\[
 \frac{C_{d,k}}{2^d}=d+\frac38-\phi_d(k),
\]
where
\[
 \phi_d(k)=\frac12\left(k+\frac dk\right)
 -\left(\frac dk-1\right)(2^{-k}-2^{1-d})
 +2^{-d}.
\]
Consequently,
\[
 \frac{C_d}{2^d}
 =d+\frac38-\min_{3\leq k<d}\phi_d(k).
\]

Put \(y=\sqrt d/k\).
Since \(k<d\), the terms \((d/k-1)2^{1-d}\) and \(2^{-d}\) are positive.
Discarding these terms and completing the square gives
\[
 \begin{aligned}
 \phi_d(k)-\sqrt d
 &>k\left(\frac12-2^{-k}\right)y^2-ky
   +\frac k2+2^{-k}\\
 &\geq2^{-k}-\frac{k}{2^k-2}>-\frac12.
 \end{aligned}
\]
Here the last inequality follows from \(2^k-2\geq2k\) for \(k\geq3\).
For the reverse bound, let \(k\) be the integer nearest to \(\sqrt d\).
For \(d\geq7\), this choice satisfies \(3\leq k<d\) and \(k=\sqrt d+O(1)\).
For this choice, the definition of \(\phi_d\) gives
\[
 \phi_d(k)\leq\frac12\left(k+\frac dk\right)+2^{-d}
 =\sqrt d+O(d^{-1/2}).
\]
Thus
\[
 \min_{3\leq k<d}\phi_d(k)
 =\sqrt d+O(1),
\]
and hence
\[
 C_d=
 \left(d-\sqrt d+O(1)\right)2^d.
\]
The same lower bound for \(\phi_d(k)\) gives, for every \(d\geq7\),
\[
 \frac{C_d}{2^d}
 <d-\sqrt d+1,
\]
which proves the uniform estimate.
\end{proof}

\bibliographystyle{amsalphaauthorsort}
\bibliography{refs}

\end{document}